\newtheorem{theorem}{Theorem}[section]
\newtheorem{corollary}[theorem]{Corollary}
\newtheorem{proposition}[theorem]{Proposition}
\theoremstyle{definition}
\newtheorem{problem}[theorem]{Problem}
\newcommand{\restrict}{\,{\mathbin{\vert\mkern-0.3mu\grave{}}}\,}
\newcommand{\remove}[1]{}
\DeclareMathOperator{\interval}{[0,1]}
\DeclareMathOperator{\proj}{{\rm proj}}
\DeclareMathOperator{\prim}{{\rm prim}}
\DeclareMathOperator{\range}{\rm range}
\DeclareMathOperator{\maxspec}{\rm maxspec}
\DeclareMathOperator{\Spec}{\rm Spec}
\DeclareMathOperator{\Boole}{\rm Boole}
\DeclareMathOperator{\dist}{\rm dist}
 \title[Centrality of projections]
{Approaching central projections in AF-algebras} 
\author{Daniele Mundici}
\address[D. Mundici]{Department of
Mathematics and Computer Science  ``Ulisse Dini'' \\
University of Florence\\
Viale Morgagni 67/A \\
I-50134 Florence \\
Italy}
\email{ mundici@math.unifi.it }
\date{\today}
\begin{document}

\keywords{AF algebra,    Elliott classification,    
Elliott local semigroup, Murray-von Neumann order, 
Grothendieck $K_0$ group, MV  algebra,  liminary C*-algebra,
 Effros-Shen algebra, Behnke-Leptin algebras,
Farey AF algebra.}

 \subjclass[2000]{Primary:   46L35.
Secondary:    
06D35,  06F20,   19A49,  46L80, 47L40.}

\begin{abstract}  Let $A$ be a unital  
 AF-algebra  whose
Murray-von Neumann order of projections is a lattice.
For any two equivalence classes
$[p]$ and $[q]$ of projections we write 
$[p]\sqsubseteq [q]$
iff    for every primitive ideal  $\mathfrak p$ of $A$ 
either $p/\mathfrak p\preceq q/\mathfrak p\preceq (1- q)/\mathfrak p$
or
$p/\mathfrak p\succeq q/\mathfrak p\succeq (1- q)/\mathfrak p.$
We prove that $p$ is central iff $[p]$ is $\sqsubseteq$-minimal
iff $[p]$ is a characteristic element in $K_0(A)$.
 %
 %
 %
 %
If, in addition,  $A$ is liminary,  then
each  extremal state  of $K_0(A)$ is  discrete,
  $K_0(A)$ has general comparability, and   $A$
 comes equipped with 
a centripetal  transformation
 $[p]\mapsto [p]^\Game$  
that  moves $p$ towards the center.
The number
$n(p) $ of $\Game$-steps needed by $[p]$ to reach the 
center has the monotonicity  property
$[p]\sqsubseteq [q]\Rightarrow n(p)\leq n(q).$
Our proofs combine the $K_0$-theoretic
version of Elliott's classification, the 
categorical equivalence $\Gamma$ between MV-algebras
and  unital $\ell$-groups, 
and \L o\'s ultraproduct theorem
for  first-order logic. 
\end{abstract}

\maketitle

\section{introduction}

Every  C*-algebra $A$ in this paper will be unital and separable.
The  ideals of $A$ will be closed and two-sided.
We let $\proj(A)$ be the set of projections of $A$, 
and  $\prim(A)$ be the
space of primitive  ideals of $A$  with the
Jacobson topology,  \cite[\S 3.1]{dix}.

Following  \cite{bra}, by an  {\it $AF$-algebra} 
we mean the norm closure of the
union of an ascending sequence of finite-dimensional $ C^*$-algebras, all
with the same unit. 

Two projections $p,q$ of  AF algebra $A$ 
are (Murray-von Neumann)
{\it equivalent}, in symbols $p\sim q,$ if
 there is an element $x\in A$ (necessarily a 
partial isometry) such that $p=x^*x$ and $q=xx^*.$
We write $p\preceq q$ if $p$ is equivalent to
a subprojection $r \leq q$. The reflexive and transitive
  $\preceq$-relation is preserved
under equivalence, and   $\preceq$ has the antisymmetry property
$p\preceq q\preceq p\Rightarrow p\sim q,$ because
$A$ is stably finite, 
  \cite[Theorem IV.2.3]{dav}.
The resulting ordering on equivalence classes of
projections in 
$A$ is called the {\it Murray-von Neumann order} of $A$.

Let 
$L(A)$ be the set of equivalence classes $[p]$ of projections 
$p$ of $A$.
Elliott's partial addition in $L(A)$ is defined by setting
 $[p]+[q]$  = $[p+q]$ whenever  $p$ and $q$  are orthogonal. One then obtains a
countable partially
 ordered ``local''  semigroup, which by Elliott's classification
\cite{ell},
is a complete classifier of AF-algebras.  The adjective ``local''
means that the addition operation in $L(A)$ is not always defined.
$L(A)$ inherits a partial order from the  $\preceq$  relation,
and Elliott's partial addition is monotone with respect to this order.

When the Murray-von Neumann order of an AF-algebra $A$
   is a lattice we say that $A$ 
is an  {\it  $AF\ell$-algebra. }

The theory of  AF$\ell$-algebras 
is grounded in  the following result, which will also be
basic for the present paper:

\begin{theorem}
\label{theorem:multi}  
Let $A$ be an AF algebra and $L(A)$  the
Elliott partially ordered  local  semigroup of $A$.
 
\medskip
(i)  \cite{munpan}
Elliott's partially defined addition $+$ in 
$L(A)$   has 
{\em at most one}
 extension to an associative, commutative, 
monotone operation  $\oplus \colon
L(A)^2\to L(A)$ 
satisfying the following condition:
For each projection  $p \in  A$, 
$ [1_A - p ]$   
 is the smallest element $[q]\in L( A)$ with    $[p] \oplus [q]=[1_A].$ 
The
 unique semigroup  $(S(A),\oplus)$
expanding  the Elliott  local  semigroup 
$L(A)$  exists iff    $A$   
is an AF$\ell$-algebra.

\medskip
(ii)  \cite{ell}    Let    $A_1$   and    
$A_2$    be  AF$\ell$-algebras.    
For  each     $j = 1, 2$    let      $\oplus_j$    
be the  extension  of Elliott's addition  given by (i).    
Then the semigroups  $(S(A_1), \oplus_1)$    and   
 $(S(A_2), \oplus_2)$   are isomorphic iff so are      
$A_1$   and    
$A_2$.
 
\medskip
   (iii)   \cite{munpan} For any
   AF$\ell$-algebra 
 $A$   the semigroup
$(S(A), \oplus)$ 
has  the  structure of  a  monoid
$(E(A), 0, \neg,  \oplus)$ 
with an involution operation  
$\neg[p] =  [1_A - p ]$. 
The Murray-von Neumann  
lattice order  of equivalence classes of projections 
    $[p],[q]$  is definable by  the
     involutive monoidal operations of $E(A)$, 
     upon setting
    $[p] \vee [q]=\neg(\neg[p]\oplus [q])\oplus [q]$ and
     $[p]\wedge [q]= \neg(\neg[p]\vee\neg [q])$   
for all $[p],[q] \in  E(A).$

\medskip
(iv)  \cite[Theorem 3.9]{mun-jfa}  
Up to isomorphism, the map 
 $A \to  (E(A),
 0, \neg, \oplus)$ 
   is a one-one correspondence between      AF$\ell$-algebras     and  
countable abelian monoids with a unary operation   $\neg$  satisfying the equations:
$$
\mbox{$\neg\neg x=x,  \,\,\,\,\,\,\,   \neg 0 \oplus x = \neg 0,   \,\,\,\,\,\,
\mbox{ and }   \,\,\,\,\,\,  \neg (\neg x \oplus y) \oplus
 y = \neg(\neg y\oplus x) \oplus x.$}
 $$
These involutive monoids are known as {\em MV-algebras}.
Let\, $\Gamma$ be the categorical equivalence between
 unital  $\ell$-groups and MV-algebras.
Then 
 $(E(A),
 0, \neg, \oplus)$  is isomorphic to 
 $\Gamma(K_0(A))$.
 
 \medskip
   (v)  (From (ii)-(iv) via   \cite{eff}.)\,\,
For any
   AF$\ell$-algebra 
 $A$
the dimension group $K_0(A)$
(which is short for $(K_0(A),K_0(A)^+,[1_A] )$)
is a countable lattice ordered abelian group
with a distinguished strong order unit (for short,
a {\em unital $\ell$-group}).
All countable unital $\ell$-groups arise in this way.
Let    $A$   and    
$A'$    be  AF$\ell$-algebras.
Then $K_0(A)$ and $K_0(A')$
  are isomorphic as unital  
  $\ell$-groups  iff       
$A$   and    
$A'$ are isomorphic.
 \end{theorem}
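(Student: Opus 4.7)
The plan is to establish the five parts in sequence, since each relies on machinery developed for its predecessors.

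For part (i), the key observation is that the lattice hypothesis on the Murray--von Neumann order supplies a Riesz-type decomposition in $L(A)$: given $[p],[q] \in L(A)$, one can exhibit orthogonal representatives by finding a subprojection of $1_A-p$ equivalent to the meet $[q\wedge\neg p]$ and splitting $q$ accordingly, then setting $[p]\oplus[q] := [p'+q']$ where $p',q'$ are the resulting orthogonal representatives. Well-definedness is ensured by the lattice structure, and uniqueness is forced by the stipulation that $[\neg p]$ be the \emph{minimum} class with $[p]\oplus[\neg p]=[1_A]$, which pins down the operation throughout. The converse direction---that existence of such an $\oplus$ requires the lattice property---follows because $\neg(\neg[p]\oplus[q])\oplus[q]$ would have to reside in $L(A)$ and serve as the join of $[p]$ and $[q]$.

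Part (ii) is Elliott's classification theorem, proved via the approximate-intertwining argument at the level of the local semigroup $L(A)$ and lifted to a $*$-isomorphism between the Bratteli direct limits. For part (iii), verifying that $(E(A),0,\neg,\oplus)$ is an MV-algebra and that $[p]\vee[q]=\neg(\neg[p]\oplus[q])\oplus[q]$ returns the Murray--von Neumann join reduces to a direct algebraic check: the crucial axiom $\neg(\neg x\oplus y)\oplus y=\neg(\neg y\oplus x)\oplus x$ captures exactly the symmetry of the join in the lattice order.

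For part (iv), the correspondence $A\mapsto E(A)$ is faithful by (ii), so the essential task is surjectivity. Given a countable MV-algebra $M$, pass to the countable unital $\ell$-group $\Gamma^{-1}(M)$, invoke the Effros--Handelman--Shen realisation of dimension groups to produce an AF-algebra $A$ with $K_0(A)\cong\Gamma^{-1}(M)$, and observe that the unit interval $[0,[1_A]]$ in this lattice-ordered group corresponds bijectively to $L(A)$; this exhibits $A$ as an AF$\ell$-algebra with $E(A)\cong M$. Part (v) then reads off the classification in terms of $K_0$: since $E(A)\cong\Gamma(K_0(A))$, classification via $E(A)$ is interchangeable with classification via $K_0(A)$ as a unital $\ell$-group, and the realisation statement transfers directly.

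The main technical obstacle is part (ii), Elliott's theorem, whose proof demands the delicate approximate-intertwining argument to lift a local-semigroup isomorphism to a $*$-isomorphism between Bratteli limits. The secondary key ingredient is the $\Gamma$ categorical equivalence driving (iv): this deep bridge between MV-algebras and unital $\ell$-groups must be invoked as a black box in order to combine Chang's equational axiomatisation of MV-algebras with the Effros--Handelman--Shen realisation theorem, thereby closing the loop between the ring-theoretic and order-theoretic sides of the classification.
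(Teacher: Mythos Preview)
The paper does not prove this theorem at all: it is stated as foundational background, with each of the five parts carrying a citation to the literature (\cite{munpan}, \cite{ell}, \cite{mun-jfa}, \cite{eff}) and no proof supplied in the text. Your sketch is a fair outline of the arguments one finds in those sources---the lattice-based construction of $\oplus$ and its uniqueness from the minimality clause for (i), Elliott's intertwining for (ii), direct verification of the MV-axioms for (iii), and the combination of the $\Gamma$ equivalence with Effros--Handelman--Shen realisation for (iv)--(v). One small point on (i): your description of forming orthogonal representatives via $[q\wedge\neg p]$ is morally right but a bit telegraphic; the actual argument in \cite{munpan} defines $[p]\oplus[q]$ as $[p]\vee[q]$ plus the ``overlap'' $[p]\wedge[q]$ (using Elliott's partial addition, which is always defined for these two classes), and then checks the required properties. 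But since the paper under review simply imports the result, there is nothing here to compare your proposal against.
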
  
 
 We refer to \cite{cigdotmun} and \cite{mun11}
  for background on MV-algebras.
 The following characterization
 (\cite[Proposition 4.13]{mun11}) will find repeated use
 throughout this paper:
 
 \begin{proposition}
 \label{proposition:prime} 
 Let $J$ be an {\em  ideal}  (= kernel
 of a homomorphism) of an MV-algebra $B$. Then the
 following conditions are equivalent:
 \begin{itemize}
 \item[(i)] The quotient  $B/J$ is an {\rm MV-chain},
 meaning that the underlying order of $B/J$ is total.
  \item[(ii)]  Whenever $J$ coincides with the intersection
  of two ideals $H$ and $K$ of $B$, then either $J=H$ or
  $J=K.$
  \end{itemize}
 \end{proposition}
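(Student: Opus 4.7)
The plan is to prove the two implications separately, exploiting the order-isomorphism between the lattice of ideals of $B/J$ and the lattice of ideals of $B$ that contain $J$. This correspondence will let me reduce both directions to the special case $J=\{0\}$, where what must be shown is that $B$ is an MV-chain if and only if $\{0\}$ is meet-irreducible in the ideal lattice of $B$.

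For (i)$\Rightarrow$(ii) the argument should be short. Assuming $B$ is an MV-chain, I would pick two ideals $H,K$ that strictly contain $\{0\}$ and choose $0\neq h\in H$ and $0\neq k\in K$. Totality of the underlying order lets me assume, without loss of generality, $h\leq k$. Since MV-algebra ideals are downward closed, this gives $h\in K$, hence $0\neq h\in H\cap K$, so $\{0\}$ is meet-irreducible, as required.

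For (ii)$\Rightarrow$(i) I argue contrapositively. Assuming $B/J$ is not an MV-chain, I pick $a,b\in B$ whose images are incomparable, and set $u=a\odot\neg b$ and $v=b\odot\neg a$. A direct calculation in $[0,1]$ shows that the MV-algebra term $(x\odot\neg y)\wedge(y\odot\neg x)$ evaluates to $0$, so, since MV-algebras form the variety generated by $[0,1]$, this is an identity valid in every MV-algebra. Applied in $B/J$ it gives $\bar u\wedge\bar v=0$, whereas the incomparability of $\bar a,\bar b$ forces $\bar u,\bar v>0$. Via the correspondence theorem, I then take $H$ and $K$ to be the ideals of $B$ containing $J$ that correspond to the principal MV-ideals generated by $\bar u$ and $\bar v$ in $B/J$; both strictly contain $J$, and the proof reduces to showing that these two principal ideals intersect trivially in $B/J$.

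I expect this last step to be the main obstacle. The principal ideal generated by $\bar u$ consists of the elements dominated by some $n\bar u=\bar u\oplus\cdots\oplus\bar u$, so what is needed is $n\bar u\wedge m\bar v=0$ for every $n,m$. My plan is to invoke Chang's subdirect representation, which embeds any MV-algebra as a subdirect product of MV-chains: it then suffices to verify the meet identity in each chain factor, where $\bar u\wedge\bar v=0$ reduces to $\min(\bar u,\bar v)=0$ and so forces one of $\bar u,\bar v$ to vanish in that factor. The corresponding $n\bar u$ or $m\bar v$ then also vanishes there, giving meet zero coordinatewise. Combining factors gives $(\bar u\,]\cap(\bar v\,]=\{0\}$ in $B/J$, which lifts to $H\cap K=J$ and contradicts (ii), closing the argument.
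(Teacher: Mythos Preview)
The paper does not prove this proposition; it is quoted from \cite[Proposition~4.13]{mun11} as background, so there is no proof here to compare against. Your argument is correct. The reduction to $J=\{0\}$ via the ideal correspondence is standard, and your (i)$\Rightarrow$(ii) is the natural one-line argument. For (ii)$\Rightarrow$(i) the choice $u=a\odot\neg b$, $v=b\odot\neg a$ is exactly right: the identity $(x\odot\neg y)\wedge(y\odot\neg x)=0$ holds in every MV-algebra because it holds in $[0,1]$, while incomparability of $\bar a,\bar b$ gives $\bar u,\bar v\neq 0$ via the characterization $a\leq b\Leftrightarrow a\odot\neg b=0$. The only nontrivial step is the passage from $\bar u\wedge\bar v=0$ to $n\bar u\wedge m\bar v=0$; your appeal to Chang's subdirect representation (each chain factor kills one of $\bar u,\bar v$, hence also its multiples) is clean and not circular, since that theorem uses prime ideals only in sense~(i). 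One could alternatively avoid subdirect representation by passing to the enveloping unital $\ell$-group via $\Gamma$ and invoking Riesz decomposition to get $na\wedge b=0$ from $a\wedge b=0$, but your route stays inside the MV-algebraic framework the paper already assumes.
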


 An ideal of $B$ will be said to be {\it prime} if it
 satisfies the two equivalent conditions above.
 For every MV-algebra $B$ we let
  \begin{equation}
 \label{equation:spec}
  \Spec(B)
 \end{equation} 
 denote the space of prime ideals of $B$ endowed with
 the Zariski (hull-kernel) topology,   (\cite[Definition 4.14]{mun11}).

\begin{corollary}
\label{corollary:spectral}
 In any AF$\ell$-algebra $A$ we have:

\smallskip 
(i)  $K_0$ induces an isomorphism
 $$
\eta \colon   \mathfrak i   
 \mapsto K_0(\mathfrak i)\cap E(A)   
 $$
  between the  lattice of  ideals of $A$ and the lattice of ideals
 of    $E(A)$. Under this isomorphism, 
 primitive ideals of $A$ correspond to 
  prime ideals of $E(A)$.

\medskip
(ii)   The  isomorphism $\eta$ is  a
 homeomorphism of the  space 
 $\prim(A)$ of primitive ideals
 of $A$  with the Jacobson topology,
 onto the space 
$
 \Spec(E(A))
$
 of prime ideals of 
$E(A)$.

\medskip
(iii)  Suppose $I$ is an ideal of the countable MV-algebra $B$.  Let
 the AF$\ell$-algebra  $A$ be  defined   by $E(A)=B$, in view of Theorem \ref{theorem:multi}(iv). 
 Let $\mathfrak i$ be the ideal of $A$ defined by
 $\eta(\mathfrak i)= I$.  Then   
 $ B/I$ is isomorphic to 
 $E(A/\mathfrak i).$

\medskip
(iv)
 For  every ideal $\mathfrak i$ of $A$,  the map
 $$
 \left[\frac{p}{\mathfrak i}\right]\mapsto \frac{[p]}{\eta(\mathfrak i)}\,,
\,\,\,\,\,\,\,\,p\in \proj(A)
$$ 
is an isomorphism
of $E(A/\mathfrak i)$ onto $E(A)/\eta(\mathfrak i).$ 
In particular, for every
$\mathfrak p\in \prim(A)$ the
 MV-algebra $E(A/\mathfrak p)$
is   totally ordered  and  
 $A/\mathfrak p$ 
  has comparability of projections
 in the sense of Murray-von Neumann.  
 
 \medskip
 (v) The map
 $
 J\mapsto J\cap \Gamma(K_0(A))
 $
 is an isomorphism of the lattice of {\em ideals} of $K_0(A)$
  (i.e., kernels of unit preserving
 $\ell$-homomorphisms of $K_0(A)$ into unital $\ell$-groups) 
 onto the lattice of ideals of $E(A).$  Further, 
 $$
 \Gamma\left(\frac{K_0(A)}{J}\right)\cong
  \frac{\Gamma(K_0(A))}{J\cap \Gamma(K_0(A))}\,. 
 $$
\end{corollary}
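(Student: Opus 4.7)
The plan is to deduce all five items by composing two well-established functorial correspondences. The first is the Bratteli--Elliott dictionary $\mathfrak{i}\mapsto K_0(\mathfrak{i})$, a lattice isomorphism between closed two-sided ideals of the unital separable AF-algebra $A$ and order ideals of the dimension group $K_0(A)$. The second is the $\Gamma$ equivalence of Theorem \ref{theorem:multi}(iv), which sends $\ell$-ideals of a unital $\ell$-group to MV-ideals of the associated MV-algebra and commutes with quotients by ideals. I would treat (v) first as a purely $\Gamma$-theoretic statement: the restriction $J\mapsto J\cap\Gamma(K_0(A))$ is the ideal part of this categorical equivalence, and the quotient compatibility $\Gamma(K_0(A)/J)\cong \Gamma(K_0(A))/(J\cap\Gamma(K_0(A)))$ is just naturality of $\Gamma$. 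Composing Bratteli--Elliott with the map from (v) then yields $\eta$, establishing the lattice isomorphism part of (i).

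For the prime/primitive correspondence in (i), I would exploit that Proposition \ref{proposition:prime}(ii) expresses primeness as a purely lattice-theoretic property, namely meet-irreducibility in the ideal lattice. Hence the lattice isomorphism $\eta$ automatically matches prime ideals on both sides. Since $A$ is separable, Dixmier's classical theorem identifies its prime and primitive ideals, closing the loop. Part (ii) then follows at once: the Jacobson topology on $\prim(A)$ and the Zariski topology on $\Spec(E(A))$ both have their closed sets defined as ``primes containing a prescribed ideal'', a recipe that uses only the ambient ideal lattice, which $\eta$ preserves.

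For (iii) and (iv), I would invoke exactness of $K_0$ on AF short exact sequences to obtain $K_0(A/\mathfrak{i})\cong K_0(A)/K_0(\mathfrak{i})$ as unital $\ell$-groups, and then apply $\Gamma$ combined with (v) to obtain $E(A/\mathfrak{i})\cong E(A)/\eta(\mathfrak{i})$. Tracing a projection $p$ through this chain yields the explicit formula $[p/\mathfrak{i}]\mapsto [p]/\eta(\mathfrak{i})$. Specialising to $\mathfrak{i}=\mathfrak{p}\in\prim(A)$: the prime ideal $\eta(\mathfrak{p})$ yields an MV-chain quotient by Proposition \ref{proposition:prime}(i), and Theorem \ref{theorem:multi}(iii) then translates totality of that chain into totality of the Murray--von Neumann order on equivalence classes of projections of $A/\mathfrak{p}$, i.e.\ comparability of projections.

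The main obstacle I foresee is the bookkeeping required to align the three parallel ideal lattices (C*-ideals of $A$, $\ell$-ideals of $K_0(A)$, MV-ideals of $E(A)$) coherently. In particular, one must verify that the Bratteli--Elliott ideal $K_0(\mathfrak{i})$, intersected with $E(A)\subseteq\Gamma(K_0(A))$, recovers exactly the MV-ideal generated by $\{[p] : p\in\proj(\mathfrak{i})\}$, so that the composition $\eta$ factors cleanly as advertised and the projection-level formula in (iv) really comes from the group-level quotient in (v). Once this compatibility is nailed down, the rest of the corollary is routine transport of structure across the two equivalences.
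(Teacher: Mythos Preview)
Your proposal is correct and follows essentially the same approach as the paper's own proof: compose the Bratteli--Elliott/Davidson--Goodearl ideal correspondence $\mathfrak i \mapsto K_0(\mathfrak i)$ with the $\Gamma$-equivalence (your item (v), the paper's \cite[Theorems 7.2.2, 7.2.4]{cigdotmun}) to obtain $\eta$, invoke exactness of $K_0$ on AF short exact sequences together with $\Gamma$'s preservation of quotients for (iii)--(iv), and read off (ii) from the hull-kernel definitions of the two topologies. The only cosmetic difference is in the primitive/prime step of (i): the paper cites Bratteli's structural Theorem~3.8 directly, whereas you argue via meet-irreducibility (Proposition~\ref{proposition:prime}(ii)) and Dixmier's prime\,$=$\,primitive theorem for separable C*-algebras---both routes are valid and equally short.
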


\begin{proof} (i) From   \cite[Proposition IV.5.1]{dav} and
 \cite[p.196, 21H]{goo-shiva} one gets an isomorphism
 between the lattice of ideals of $A$ and the lattice of
 ideals of the $\ell$-group $K_0(A).$ 
The
preservation properties of $\Gamma$, \cite[Theorems 7.2.2, 
 7.2.4]{cigdotmun} then yield the desired isomorphism.
 For the second statement,  combine
  \cite[Theorem 3.8]{bra} with
  the characterization given in 
Proposition \ref{proposition:prime}
   of prime ideals of an
 MV-algebra. 

\smallskip
(ii)  follows from (i), by definition of the topologies
of $\prim(A)$  and of  $\Spec(E(A))$.

\medskip
(iii)    We have an exact sequence
$$
0\to \mathfrak i\to A\to A/\mathfrak i\to 0.
$$
Correspondingly (\cite[IV.15]{dav}, \cite[Corollary 9.2]{eff}) we have an
exact sequence
$$
0\to K_0(\mathfrak i) \to K_0(A)\to K_0(A/\mathfrak i) \to 0,
$$
whence 
$$
K_0\left(\frac{A}{\mathfrak i}\right)\cong \frac{K_0(A)}{K_0(\mathfrak i)}.
$$
The preservation properties of $\Gamma$ under quotients
 \cite[Theorem 7.2.4]{cigdotmun},
  together with Theorem \ref{theorem:multi}(iv)-(v) yield
  $$
E\left(\frac{A}{\mathfrak i}\right)
\cong  \Gamma\left(K_0\left(\frac{A}{\mathfrak i}\right)\right)\cong
   \frac{\Gamma(K_0(A))}{K_0(\mathfrak i)\cap
   \Gamma(K_0(A))}\cong\frac{E(A)}{\eta(\mathfrak i)}=
   \frac{B}{I}.
  $$
  
\smallskip
(iv)  Combine  (i) and  (iii)  with the preservation properties of
$K_0$ for exact sequences  and the preservation
properties of $\Gamma$ under quotients. 
The   MV-algebra
$E(A/\mathfrak p)$
is   totally ordered by
Proposiiton \ref{proposition:prime}, because, by (ii),
$\eta(\mathfrak p)\in \Spec(E(A))$ 
whenever  $\mathfrak p\in \prim(A)$.
By Theorem \ref{theorem:multi}(iv), 
 $A/\mathfrak p$ 
  has comparability of projections.

\smallskip
(v) This follows by another application of
  \cite[Theorems 7.2.2, 7.2.4]{cigdotmun}.  
 \end{proof}

\bigskip
\section{Central projections in AF$\ell$-algebras}
MV-algebras were invented by C.C.Chang \cite{cha} to give
an algebraic proof of the completeness of the \L ukasiewicz axioms.
For any MV-algebra $D$ we let  
$$
\Boole(D)=\{a\in D\mid a\oplus a=a\}.
$$
As observed by Chang in \cite[Theorems 1.16-1.17]{cha},
${\Boole}(D)$  is a subalgebra of $D$ which turns out to be
a boolean algebra.

The following theorem and its
extension Theorem \ref{theorem:extension} make precise the
 intuition that commutative AF-algebras stand to boolean
algebras as AF$\ell$-algebras stand to MV-algebras:

\begin{theorem}
\label{theorem:threeconditions}
For every projection $p$ of an 
  AF$\ell$-algebra $A$
the  following conditions are equivalent:

\smallskip
\begin{itemize}
\item[(i)] $p/\mathfrak p \in\{0,1\}\subseteq A/\mathfrak p$\,\,\,
 for all \,\,$\mathfrak p\in \prim(A)$.

\medskip
\item[(ii)]  $[p]\in {\Boole}(E(A))$.

\medskip
\item[(iii)]  $p$ is central in $A$.

\medskip
\item[(iv)] $[p]$ is a {\em characteristic element} of
$K_0(A),$ in the sense that 
 $[p]\wedge [1_A-p]$ exists and equals $0$,
 \cite[Definition p.127]{goo-ams}. 
\end{itemize}
\end{theorem}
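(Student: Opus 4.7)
The plan is to cycle through the four conditions using the MV-algebra/$\ell$-group dictionary $E(A) = \Gamma(K_0(A))$ of Theorem \ref{theorem:multi} together with the spectral correspondence of Corollary \ref{corollary:spectral}. The equivalence (ii) $\Leftrightarrow$ (iv) is the standard MV-algebraic fact, essentially Chang \cite[Theorems 1.16-1.17]{cha}, that an element $a$ of any MV-algebra satisfies $a \oplus a = a$ iff it is complemented, i.e.\ $a \wedge \neg a = 0$; applied to $[p] \in E(A)$ with $\neg[p] = [1_A - p]$, this is precisely the $\ell$-group equation of (iv).

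For (i) $\Leftrightarrow$ (ii), the key input is Corollary \ref{corollary:spectral}(iv): for each $\mathfrak p \in \prim(A)$ the quotient $E(A)/\eta(\mathfrak p) \cong E(A/\mathfrak p)$ is an MV-chain, and in a chain the only boolean elements are $0$ and the top. Hence if $[p] \in \Boole(E(A))$, its image in each such quotient is $0$ or $[1_{A/\mathfrak p}]$, and stable finiteness of $A/\mathfrak p$ upgrades the Murray--von Neumann equivalence to the literal equality $p/\mathfrak p \in \{0, 1_{A/\mathfrak p}\}$. Conversely, if (i) holds then the MV-equation $[p] \oplus [p] = [p]$ is satisfied in every chain quotient $E(A)/\eta(\mathfrak p)$; since the lattice isomorphism $\eta$ of Corollary \ref{corollary:spectral}(i) preserves infima and the intersection of primitive ideals of any C*-algebra is $\{0\}$, the intersection of prime ideals of $E(A)$ is also $\{0\}$, so the subdirect embedding $E(A) \hookrightarrow \prod_{\mathfrak p} E(A/\mathfrak p)$ is injective and $[p] \oplus [p] = [p]$ holds in $E(A)$ itself.

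For (iii) $\Leftrightarrow$ (i): if $p$ is central then so is $p/\mathfrak p$ in the primitive unital quotient $A/\mathfrak p$, whose center reduces to the scalars $\mathbb C \cdot 1_{A/\mathfrak p}$, forcing the projection $p/\mathfrak p$ to lie in $\{0, 1\}$. Conversely, under (i), for every $a \in A$ the commutator $[p, a]$ lies in every $\mathfrak p \in \prim(A)$, and the intersection of primitive ideals of any C*-algebra vanishes, so $p$ commutes with all of $A$ and belongs to its center.

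The main obstacle I anticipate is the careful handling of the jump from Murray--von Neumann equivalence classes of projections to literal projections in the quotient: stable finiteness of $A$ is inherited by every $A/\mathfrak p$ (\cite[Theorem IV.2.3]{dav}), so any partial isometry $v$ with $v^*v = p/\mathfrak p$ and $vv^* = 1_{A/\mathfrak p}$ must be a unitary and therefore forces $p/\mathfrak p = 1_{A/\mathfrak p}$, and analogously $p/\mathfrak p \sim 0$ forces $p/\mathfrak p = 0$. Once this subtlety and the identification of primes via $\eta$ are in place, the argument is a clean unwrapping of the $\Gamma$- and $\eta$-dictionaries.
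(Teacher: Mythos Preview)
Your proposal is correct. The overall structure and the implications (ii)$\Rightarrow$(i), (i)$\Rightarrow$(iii), and (ii)$\Leftrightarrow$(iv) match the paper's proof closely, including the use of stable finiteness to pass from $[p/\mathfrak p]\in\{0,1\}$ to the literal equality $p/\mathfrak p\in\{0,1\}$.

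The one genuine difference is how you close the cycle through (iii). The paper proves (iii)$\Rightarrow$(ii) by contradiction: assuming $p$ central but $[p]\notin\Boole(E(A))$, it locates a prime $P$ with $[p]/P\notin\{0,1\}$, and then shows by a direct partial-isometry computation that the central projections $p/\mathfrak p$ and $(1_A-p)/\mathfrak p$ cannot be Murray--von Neumann comparable in $A/\mathfrak p$, contradicting the comparability of projections in the primitive quotient. You instead prove (iii)$\Rightarrow$(i) directly, using only that a primitive unital C*-algebra has center $\mathbb C\cdot 1$; this is shorter and more elementary. The price is that you then need the extra step (i)$\Rightarrow$(ii) via the subdirect embedding $E(A)\hookrightarrow\prod_{\mathfrak p}E(A/\mathfrak p)$, which the paper avoids. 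Both routes are clean; yours trades a C*-algebraic computation for a purely order-theoretic one. One small point worth making explicit in your (ii)$\Leftrightarrow$(iv): the MV-algebraic meet $[p]\wedge\neg[p]$ in $E(A)$ agrees with the $\ell$-group meet $[p]\wedge[1_A-p]$ in $K_0(A)$ because the lattice order of $\Gamma(K_0(A))$ is the restriction of that of $K_0(A)$; the paper invokes \cite[Theorem 2.5]{mun-jfa} for this, and you should cite the same or simply note it as part of the $\Gamma$-dictionary.
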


\begin{proof} 
(ii)$\Rightarrow$(i)  From the assumption
$[p]\oplus[p]=[p]$
 it follows that
 $[p]/P\oplus [p]/P=[p]/P$, whence 
$[p]/P\in \Boole(E(A)/P) =  \{0,1\}$
 for each 
 $P\in \Spec(E(A))$, because  $E(A)/P$
 is totally ordered  (Proposition  \ref{proposition:prime}).   Let
$\mathfrak p$ be the primitive ideal of $A$ given by
$\eta(\mathfrak p)=P$, with
  $\eta$ the isomorphism of 
  Corollary \ref{corollary:spectral}(i).
Then $[p/\mathfrak p]\in \Boole(E(A/\mathfrak p))=\{0,1\},$
because
by
Corollary \ref{corollary:spectral}(iv),
 $E(A/\mathfrak p)\cong E(A)/P$ is totally ordered.
Since $P$ is an arbitrary prime ideal of $E(A)$, 
then $\mathfrak p$ is an
arbitrary primitive ideal of   $A$. We conclude that 
$p/\mathfrak p \in \{0,1\}$ for all  $\mathfrak p\in \prim(A)$.
  
\smallskip
(i)$\Rightarrow$(iii) The hypothesis implies that $p/\mathfrak p$
is central in $A/\mathfrak p$ for each  
 $\mathfrak p\in \prim(A)$.
Then $p$ is central in $A$.

\smallskip
(iii)$\Rightarrow$(ii)
  By way of contradiction assume $p$ central in $A$ but
  $[p]\notin {\Boole}(E(A))$.  By
  \cite[Corollary 1.2.14]{cigdotmun},
  $$
  \bigcap\{P\mid P\in \Spec(E(A))\}=\{0\}.
  $$
Therefore, 
   for some 
  $P\in \Spec(E(A))$, $[p]/P$ does not belong to $ \Boole(E(A)/P)$.
In view of Corollary \ref{corollary:spectral}(i), let
$\mathfrak p\in \prim(A)$ be defined by 
$\eta(\mathfrak p)=P.$
Then 
\begin{equation}
\label{equation:77}
  \frac{p}{\mathfrak p}\,\, \mbox{ is central and } \,\, \frac{p}{\mathfrak p}\notin \{0,1\} \subseteq  \frac{A}{\mathfrak p}.
\end{equation}

\medskip
\noindent{\it Claim.}   The projections
 ${p}/{\mathfrak p}$ and $(1_A-p)/\mathfrak p$
 of $A/\mathfrak p$
 are not Murray-von Neumann comparable. 

Arguing by way of contradiction, let us suppose
 ${p}/{\mathfrak p}$ and $(1_A-p)/\mathfrak p$
  are comparable, say,
$$
u^*u=\frac{1_A-p}{\mathfrak p}\,\,\ 
\mbox{ and }\,\,\,uu^*=\frac{q}{\mathfrak p}\leq \frac{p}{\mathfrak p},
$$
 for some
partial isometry $u\in A/\mathfrak p$ and
 ${q}/{\mathfrak p}\in \proj(A/\mathfrak p).$
From
$$\frac{q}{\mathfrak p}\,\,\frac{p}{\mathfrak p}=\frac{p}{\mathfrak p}\,\,
\frac{q}{\mathfrak p}=\frac{q}{\mathfrak p}$$
it follows that 
$$
\frac{1_A-p}{\mathfrak p}=\frac{1_A-p}{\mathfrak p}\,\,\,\frac{1_A-p}{\mathfrak p}=u^*uu^*u=u^*\frac{q}{\mathfrak p}u\leq u^*\frac{p}{\mathfrak p}u=u^*u\frac{p}{\mathfrak p}=\frac{1_A-p}{\mathfrak p}\,\,\frac{p}{\mathfrak p}=0\,,
$$
against \eqref{equation:77}. 
Our claim is settled.

%
%
%

\medskip

On the other hand, since $P$ is prime, then 
   $E(A)/P$ is totally
ordered,  by Proposiiton \ref{proposition:prime}.
 By 
  Corollary \ref{corollary:spectral}(iii)-(iv),
   $A/\mathfrak p$ has comparability, which contradicts
   our claim.

\medskip
(ii) $\Leftrightarrow$ (iv). 
  By Theorem \ref{theorem:multi}(iv)-(v)  and
definition of $\Gamma$  (\cite[Definition 2.4]{mun-jfa}), 
we can write
$E(A)=\{x\in K_0(A)\mid 0\leq x\leq u\}$, where
$u$ is the order-unit of $K_0(A)$, coinciding with
the unit element 1$=[1_A]$ of $E(A).$ By \cite[Theorem 2.5]{mun-jfa}
the lattice order of $E(A)$ agrees with the restriction
to $E(A)$ of the
lattice order of $K_0(A).$
The desired conclusion now follows from
  \cite[Theorem 8.7, p.130]{goo-ams}.
\end{proof}

 \bigskip
\subsection*{The ordering $\sqsubseteq$ and the map 
$\sigma^*\colon\interval\to \interval$}
The rest of this section is devoted to proving the
following extension of 
Theorem \ref{theorem:threeconditions}:

\begin{theorem}
\label{theorem:extension}
Assume $A$ is an  AF$\ell$-algebra.  
For any 
 $x,y\in E(A)$ let us write
 $x\sqsubseteq y$\,\, iff \,\, for every  prime ideal $P$ of $E(A)$
 $$
(y/P<\neg y/P\,\,\, \mbox{implies} \,\,\,  x/P\leq y/P)
    \,\,\,\,\mbox{  and  }\,\,\,
 (y /P>\neg y/P \,\,\, \mbox{implies}  \,\,\,   x /P\geq y/P),
$$
with $\leq$ the underlying total order of $E(A)/P$,
(Corollary  \ref{corollary:spectral}(i)).
Then 
$\sqsubseteq$ endows 
$E(A)$
with  a partial order (reflexive, transitive,
antisymmetric) relation.  
Further, for every  $p\in \proj(A)$,
the equivalent  conditions (i)-(iv) in Theorem \ref{theorem:threeconditions}  are equivalent to
$[p]$ being  $\sqsubseteq$-minimal in $E(A).$
\end{theorem}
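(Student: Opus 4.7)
I would address the partial-order claims for $\sqsubseteq$ and the characterization of $\sqsubseteq$-minimality independently. For the order, reflexivity is immediate; for transitivity, given $x\sqsubseteq y\sqsubseteq z$ and a prime $P$ of $E(A)$, I use that $E(A)/P$ is totally ordered (Corollary~\ref{corollary:spectral}(iv)) and split on whether $z/P$ is below, above, or equal to $\neg z/P$. If $z/P<\neg z/P$, then $y\sqsubseteq z$ gives $y/P\leq z/P$, whence $\neg y/P\geq\neg z/P>z/P\geq y/P$ forces $y/P<\neg y/P$, and $x\sqsubseteq y$ then yields $x/P\leq y/P\leq z/P$; the case $z/P>\neg z/P$ is symmetric, and $z/P=\neg z/P$ imposes no constraint. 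Antisymmetry follows from a parallel case analysis: when $y/P\neq\neg y/P$, combining $x\sqsubseteq y$ and $y\sqsubseteq x$ pins $x/P=y/P$ directly; when $y/P=\neg y/P$, any strict comparison $x/P<\neg x/P$ or $x/P>\neg x/P$ contradicts the bounds, so $x/P=\neg x/P$, and uniqueness of the fixed point of $\neg$ in an MV-chain (two such fixed points $a,b$ satisfy $a\leq b\Leftrightarrow\neg a\geq\neg b$, forcing $a=b$) delivers $x/P=y/P$. Semisimplicity $\bigcap\Spec(E(A))=\{0\}$ from \cite[Corollary~1.2.14]{cigdotmun} then gives $x=y$.

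For the direction ``central $\Rightarrow$ $\sqsubseteq$-minimal'', if $[p]$ satisfies the equivalent conditions of Theorem~\ref{theorem:threeconditions}, so $[p]/P\in\{0,[1_A]/P\}$ at every prime $P$, then for any $[q]\sqsubseteq[p]$ each prime imposes either $[q]/P\leq 0$ or $[q]/P\geq[1_A]/P$, forcing $[q]/P=[p]/P$; semisimplicity concludes $[q]=[p]$.

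The converse ``$\sqsubseteq$-minimal $\Rightarrow$ central'' is the main obstacle. Arguing contrapositively, I must exhibit $[q]\sqsubset[p]$ whenever $[p]$ is not central. Working in the $\ell$-group $K_0(A)$, I consider the centripetal candidate
\[
[q]:=\bigl((3[p]-[1_A])\vee 0\bigr)\wedge[1_A]\in E(A).
\]
At each prime $P$, writing $a=[p]/P$ and $u=[1_A]/P$, the value $[q]/P$ is $0$, $3a-u$, or $u$ according as $a\leq u/3$, $u/3<a<2u/3$, or $a\geq 2u/3$; a direct subcase check confirms $[q]\sqsubseteq[p]$, and $[q]/P=[p]/P$ precisely when $a\in\{0,u/2,u\}$ (the midpoint $u/2$ being relevant only when it exists in $E(A)/P$). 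Hence if some prime has $[p]/P\notin\{0,u/2,u\}$, we are done. The delicate residual case is when $[p]/P\in\{0,u/2,u\}$ at every prime while $[p]$ is non-central: then $[p]/P_0$ equals the midpoint of $E(A)/P_0$ at some $P_0$. I switch to $[q]':=[p]+([p]\wedge\neg[p])\in K_0(A)$, which lies in $E(A)$ because $[p]\wedge\neg[p]\leq\neg[p]$; at every prime the three possible values $0,u/2,u$ of $[p]/P$ send $[q]'/P$ to $0,u,u$ respectively, satisfying $\sqsubseteq$ in each case (midpoint primes impose no constraint), while $[q]'/P_0=u\neq u/2=[p]/P_0$ shows $[q]'\neq[p]$, concluding the contrapositive.
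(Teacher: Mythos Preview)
Your argument is correct, and at its core it uses the same centripetal transformation as the paper: your candidate $[q]=((3[p]-[1_A])\vee 0)\wedge[1_A]$ in $K_0(A)$ is exactly the paper's $[p]_\sigma$, where $\sigma=(X\odot(X\oplus X))\oplus(X\odot X)$ realizes $t\mapsto\min(1,\max(0,3t-1))$ on $[0,1]$. The genuine difference is in how the inequality $[q]\sqsubseteq[p]$ is established. You verify it by elementary ordered-group arithmetic in each totally ordered quotient, splitting on $3a\le u$, $u<3a<2u$, $3a\ge 2u$; the paper instead packages this as the general statement $c_\sigma\sqsubseteq c$ for arbitrary MV-algebras (Propositions~\ref{proposition:sigma}--\ref{proposition:sigma-bis}) and proves it via \L o\'s's ultraproduct theorem together with Di~Nola's embedding of MV-chains into ultrapowers of $[0,1]$, then passes through maximal ideals before primes (Claims~1--4). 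Your route is more elementary and self-contained for the theorem at hand; the paper's route costs the model-theoretic machinery but produces reusable statements about $\sigma$ that drive the later fixpoint analysis in Corollary~\ref{corollary:sharpener}. For the residual midpoint case your $[q]'=[p]+([p]\wedge\neg[p])$ (which on the relevant primes agrees with $[p]\oplus[p]$) pushes the midpoint up to $u$, while the paper uses $b\odot b$ to push it down to $0$; these are dual and interchangeable. One cosmetic point: your shorthand ``$a\le u/3$'', ``$a=u/2$'' should be read as $3a\le u$, $2a=u$, since the quotient group need not be divisible---but your computations only invoke these integral inequalities, so the argument is unaffected.
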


\medskip
The following transformation will play a key role in the sequel:
 Let $\tau=\tau(X)$ be an MV-term in 
the variable $X$, \cite[Definition 1.4.1]{cigdotmun}.
For any  MV-algebra $B$ and  $a\in B$,
by induction on 
  the number of operation symbols in $\tau$ let us define  
\begin{equation}
\label{equation:term}
a_X=a, \,\,\,a_{\tau_1\oplus \tau_2}=a_{\tau_1}\oplus a_{\tau_2},
\,\,\,\,a_{\neg \tau}=\neg a_\tau.
\end{equation}
This
transforms $a$ into an element  $a_\tau\in B.$ 
The ambient algebra $B$ will always be clear from the context.
For every ideal $I$ of $B$, induction on the number
of operation symbols in $\tau$ yields 
\begin{equation}
\label{equation:quotient}
(a/I)_\tau= a_\tau/I.
\end{equation}

Following
\cite[p.8]{cigdotmun}, for any two MV-terms  $\rho, \tau$ we 
let
$
\rho\odot \tau$
denote the MV-term 
$
\neg(\neg \rho
\oplus \neg \tau).
$
Correspondingly, for any two elements
$a,b$ of an MV-algebra $B$ we write
\begin{equation}
\label{equation:odot}
a\odot b \mbox{ as an abbreviation of } \neg(\neg a
\oplus \neg b).
\end{equation}

Let   $Free_1$ denote  the
free one-generator MV-algebra.
As a special case of McNaughton representation theorem,
 (\cite[Corollary 3.2.8, Theorem 9.1.5]{cigdotmun}),  
$Free_1$ is the MV-algebra of all {\it
one-variable  McNaughton functions},
 those continuous piecewise  (affine)  linear functions
$f\colon \interval\to \interval$ 
whose linear pieces have integer
coefficients. Further,
the identity function
$\pi_1\colon\interval\to\interval$ freely
generates $Free_1$.

 \begin{figure}
    \begin{center}
    \includegraphics[height=8.96cm]{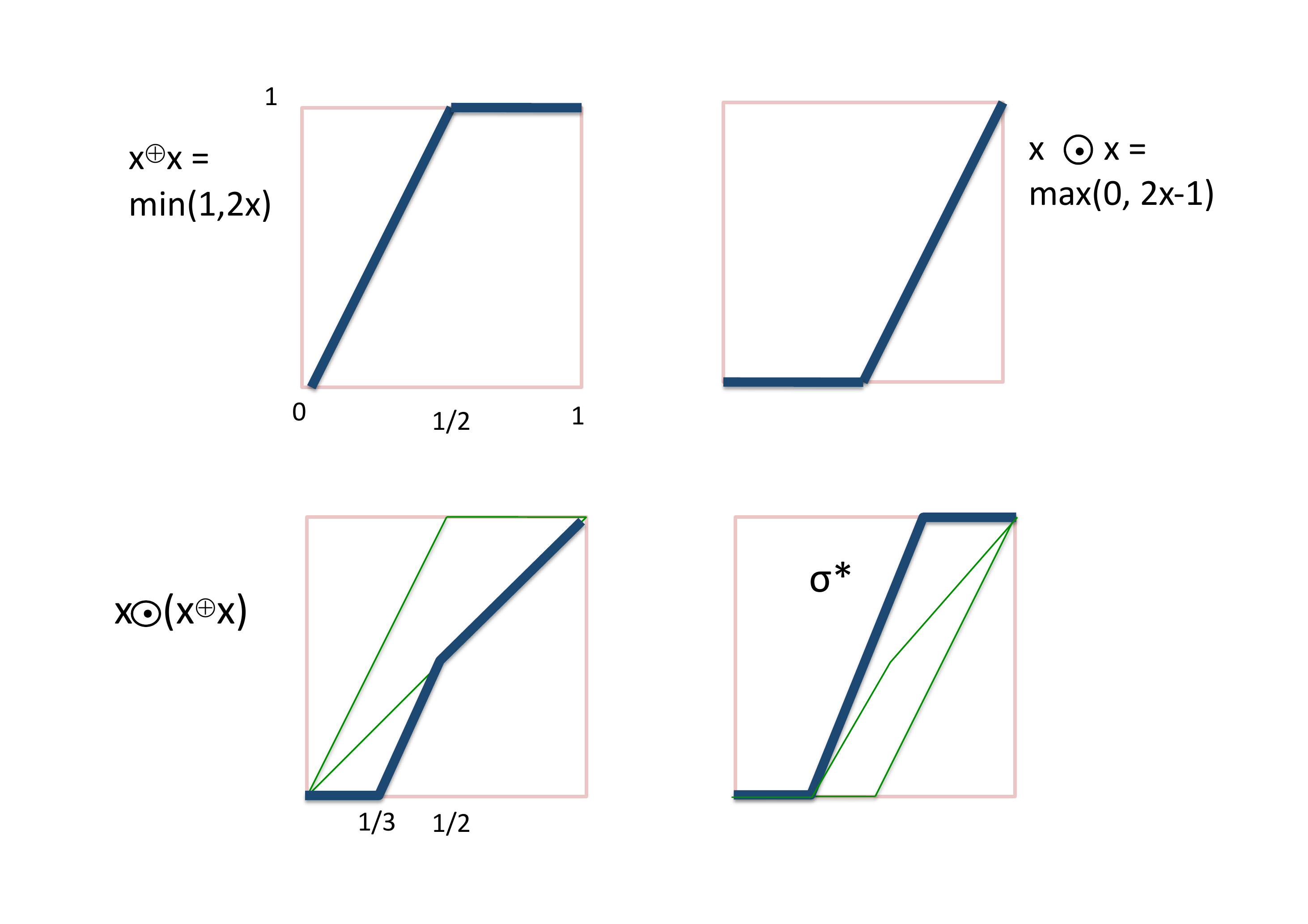}
    \end{center}
        \caption{\small  
        The graph of the function 
$\sigma^* (x)=(x\odot(x\oplus x))
\oplus (x \odot x)=
\min(1,\max(0,3x))$, and of some of its constituents.
$\sigma^*\colon\interval\to\interval$ is a member of the
free one-generator MV-algebra $Free_1$ 
consisting of all
one-variable McNaughton functions.
As usual,   $x\odot x$
is an abbreviation of $\neg(\neg x\oplus \neg x)$.
}
    \label{figure:spezzate}
   \end{figure}

\begin{proposition}
\label{proposition:transformation}
With the notation of
\eqref{equation:term}
and  \eqref{equation:odot},
let  the MV-term $\sigma$ be defined  by
$\sigma =(X\odot(X\oplus X))
\oplus (X \odot X).$
Let us write
  $\sigma^*$ instead of $(\pi_1)_\sigma.$

\medskip
\begin{itemize}
\item[(i)]
For all $x\in \interval,$
$
\sigma^*(x)
=(x\odot(x\oplus x))
\oplus (x \odot x)=\min(1,\max(0,3x)).
$

\medskip
\item[(ii)]
More generally, for any cardinal
$\kappa>0,$
let  $f $ belong to  the
free $\kappa$-generator MV-algebra  $Free_\kappa$  (the algebra of
  McNaughton functions over the Tychonoff cube
  $\interval^\kappa$, \cite[Theorem 9.1.5]{cigdotmun}).
Then $f_\sigma= \sigma^*\circ f$, with $\circ $ denoting composition.
\end{itemize}
\end{proposition}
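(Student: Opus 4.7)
The plan for (i) is a direct computation in the standard MV-algebra $\interval$, whose operations are $a\oplus b=\min(1,a+b)$, $\neg a=1-a$, and consequently $a\odot b=\max(0,a+b-1)$. First I would compute the three constituents $x\oplus x=\min(1,2x)$, $x\odot x=\max(0,2x-1)$, and $x\odot(x\oplus x)$, splitting at $x=1/2$ since that is where $x\oplus x$ changes form. Taking their $\oplus$-sum and checking the remaining sub-cases yields the advertised piecewise-linear function of slope $3$ matching the claimed closed form. The work here is just bookkeeping across a handful of linear pieces.

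For (ii) the plan is to exploit the universal property of $Free_1$. Since $\pi_1$ freely generates $Free_1$, there is a unique MV-homomorphism
$$
\phi\colon Free_1 \to Free_\kappa
$$
with $\phi(\pi_1)=f$. A straightforward induction on the number of connectives in an MV-term $\tau(X)$, using the inductive clauses \eqref{equation:term} together with the fact that $\phi$ commutes with $\oplus$ and $\neg$, shows that
$$
\phi(a_\tau)=\phi(a)_\tau \quad \text{for every } a\in Free_1.
$$
Specializing $a:=\pi_1$ and $\tau:=\sigma$ gives $\phi(\sigma^*)=f_\sigma$.

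It remains to identify $\phi$ with the map $g\mapsto g\circ f$. For every $\vec{x}\in\interval^\kappa$ the pointwise evaluation $\mathrm{ev}_{\vec{x}}\colon Free_\kappa\to\interval$ is an MV-homomorphism, so $\mathrm{ev}_{\vec{x}}\circ\phi$ is the unique MV-homomorphism $Free_1\to\interval$ sending $\pi_1\mapsto f(\vec{x})$; in the McNaughton realization of $Free_1$ this homomorphism is visibly $g\mapsto g(f(\vec{x}))$. Hence $\phi(g)(\vec{x})=g(f(\vec{x}))$ for every $\vec{x}$, i.e.\ $\phi(g)=g\circ f$, and specializing $g:=\sigma^*$ yields $f_\sigma=\sigma^*\circ f$.

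The main obstacle is really only the bookkeeping in (i); part (ii) is a clean universal-property argument once one recognizes that substitution of an element $a$ into a term $\tau$ coincides with applying the unique MV-homomorphism from $Free_1$ that sends the free generator $\pi_1$ to $a$, combined with the pointwise nature of the operations on McNaughton functions.
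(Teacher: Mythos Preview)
Your proposal is correct and essentially matches the paper's approach. The paper disposes of (i) as ``a routine verification'' (exactly your piecewise computation) and of (ii) with a one-line induction hint; your universal-property framing via the homomorphism $\phi\colon Free_1\to Free_\kappa$ is a slightly more elaborate but equivalent packaging of the same induction on the term $\sigma$, together with the pointwise nature of the operations in $Free_\kappa$ (which you access through the evaluation maps $\mathrm{ev}_{\vec{x}}$).
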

\begin{proof} (i) A routine verification. 
See Figure \ref{figure:spezzate}.
(ii)  By  induction on the number of operation symbols in 
an MV-term coding $f$.
\end{proof}

Following tradition, 
 by  the  {\it standard} MV-algebra  $[0,1]$ we mean
the real unit real interval equipped
by  the operations $\neg y=1-y$ and $y\oplus z=
\min(1,y+z)$. There will never be danger of confusion
between the standard MV-algebra and the real 
unit interval.

\begin{proposition}
\label{proposition:sigma}
  For any MV-algebra $D$ and    $c \in D$,
$c_\sigma \sqsubseteq c.$ 
\end{proposition}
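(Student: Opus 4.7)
The plan is to verify the two implications defining $c_\sigma \sqsubseteq c$ prime-ideal by prime-ideal. Fix $P \in \Spec(D)$: by Proposition \ref{proposition:prime} the quotient $D/P$ is an MV-chain, and by \eqref{equation:quotient} the $\sigma$-transform commutes with the quotient map, so $c_\sigma/P = (c/P)_\sigma$. Writing $d := c/P$, the task reduces to proving, in an arbitrary MV-chain, that $d < \neg d$ forces $d_\sigma \leq d$ and that $d > \neg d$ forces $d_\sigma \geq d$.

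The key algebraic input is the standard MV-algebra identity $a \odot b = 0 \iff a \leq \neg b$, available from \cite{cigdotmun}. Specialized once to $a = b = d$ and once to $a = b = \neg d$ (together with $\neg\neg d = d$), it yields
\[
 d \odot d = 0 \iff d \leq \neg d, \qquad d \oplus d = 1 \iff d \geq \neg d.
\]
These two equivalences make the defining expression $(d \odot (d \oplus d)) \oplus (d \odot d)$ of $d_\sigma$ degenerate on each half of the chain.

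The case analysis then runs as follows. If $d < \neg d$, the rightmost summand $d \odot d$ vanishes, so $d_\sigma = d \odot (d \oplus d) \leq d$, the last inequality because $x \odot y \leq x$ holds universally. If $d > \neg d$, the middle factor $d \oplus d$ equals $1$, so $d \odot (d \oplus d) = d \odot 1 = d$ and $d_\sigma = d \oplus (d \odot d) \geq d$, since $x \leq x \oplus y$ holds universally. The diagonal case $d = \neg d$ requires no attention, since both hypotheses of the $\sqsubseteq$-definition are strict and thus vacuously satisfied. With $P \in \Spec(D)$ arbitrary, this delivers $c_\sigma \sqsubseteq c$.

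I do not anticipate any serious obstacle. The statement is essentially a syntactic shadow of the geometric picture in Figure \ref{figure:spezzate}: the MV-term $\sigma$ is engineered precisely so that, on each side of the midpoint in any MV-chain, one of its two summands is absorbed (to $0$ on the ``small'' side, making $\sigma$ a $\odot$-product below $d$; to $1$ on the ``large'' side, making $\sigma$ an $\oplus$-sum above $d$), and the comparison with $d$ follows from the universal inequalities $x \odot y \leq x \leq x \oplus y$.
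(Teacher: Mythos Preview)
Your proof is correct and takes a genuinely different route from the paper. The paper establishes the two implications first in the standard MV-algebra $[0,1]$, then invokes \L o\'s' ultraproduct theorem and Di~Nola's representation theorem (every MV-chain embeds in an ultrapower of $[0,1]$) to transfer the universal sentence to arbitrary MV-chains; only then does it descend to general $D$ via prime quotients, exactly as you do. Your argument bypasses the model theory entirely: once in an MV-chain, you exploit the equivalences $d\odot d=0\Leftrightarrow d\leq\neg d$ and $d\oplus d=1\Leftrightarrow d\geq\neg d$ to collapse one summand or one factor of $\sigma$, and finish with the universal inequalities $x\odot y\leq x\leq x\oplus y$. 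Your approach is shorter, self-contained, and needs only basic MV identities from \cite{cigdotmun}; the paper's approach, on the other hand, is reusable---the same \L o\'s/Di~Nola machinery is what drives the companion Proposition~\ref{proposition:sigma-bis}, where the strict inequalities $d_\sigma<d$ (resp.\ $d_\sigma>d$) are needed and your elementary collapse does not immediately yield strictness.
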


\begin{proof}

Consider the conjunction  $\psi$ of the following  statements
in the language of MV-algebras:
 $$
\forall  z\,\,\, 
(\mathsf{IF} \,\,\, (z  < \neg z)
\,\,\mathsf{THEN}\,\,\, 
(z \odot(z \oplus z))
\oplus (z \odot z)\leq z) 
$$
$$
\forall  z\,\,\, 
(\mathsf{IF} \,\,\, (z  > \neg z)
\,\,\mathsf{THEN}\,\,\, 
(z\odot(z\oplus z))
\oplus (z \odot z)\geq z), 
$$
in first-order logic with the usual connectives,
quantifiers and identity.
Here $\leq $ is the natural order of any MV-algebra:
$a\leq b$ iff $\neg a\oplus b=1,$ \cite[p.9]{cigdotmun}.
 $\psi$ is
satisfied by  the standard MV-algebra.
This is so because $\psi$ says
 $
\forall  z\,\,\,  (z\odot(z\oplus z))
\oplus (z \odot z) \sqsubseteq z. 
$ 
i.e.,
$
\forall  z\,\,\, z_\sigma\sqsubseteq z,
$
which is easily verified in $\interval$.

By \L o\'s ultraproduct theorem \cite[Theorem 4.1.9, Corollary 4.1.10]{chakei},   $\psi$ is satisfied by  every ultrapower  
$\interval^\ast$ of the standard MV-algebra.

By Di Nola's representation theorem, 
 \cite[9.5.1]{cigdotmun},
every  MV-chain  is embeddable in some
ultrapower of $\interval$. 
Since $\psi$ is a universal
sentence, we have thus shown:
\begin{equation}
\label{equation:dinola}
\mbox{for each element $z$ of every MV-chain,}\,\,\,
 z_\sigma\sqsubseteq z.
\end{equation}
 
\smallskip
To conclude the proof, arguing by
  way of contradiction, suppose  there is an MV-algebra $D$
and $c\in D$ such that $c_\sigma\sqsubseteq c$ fails.  
By definition, there is  $P \in \Spec(D)$ such that  in the
quotient MV-chain $D/P$,\,\, (Proposiiton \ref{proposition:prime}), 
 we either have
$c/P<\neg c/P$ and   $c_\sigma/P> c/P$, or
$c/P>\neg c/P$ and   $c_\sigma/P < c/P$.   Say 
without loss of generality, 
$c/P<\neg c/P$ and   $c_\sigma/P> c/P$.   
By \eqref{equation:quotient}, 
$c_\sigma/P= (c/P)_\sigma.$
So $c/P$  is a counterexample of
 \eqref{equation:dinola} in
  $D/P$, and the proof is complete.  
\end{proof}

\begin{proposition}
\label{proposition:sigma-bis}
 Every  MV-chain $C$  
  satisfies the conjunction 
   of the following two sentences
 of first-order logic:
 $$
 \forall x\,\,\,
 (\mathsf{IF}\,\,\,0<x<\neg x \,\,\,\mathsf{THEN}\,\,\, 
 (x\odot(x\oplus x))
\oplus (x \odot x) < x)
 $$
  $$
 \forall x\,\,\,
 (\mathsf{IF}\,\,\, \neg x <x<1 \,\,\,\mathsf{THEN}\,\,\,
 (x\odot(x\oplus x))
\oplus (x \odot x) > x).
 $$ 
\end{proposition}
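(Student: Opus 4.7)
The plan is to mimic the strategy of the proof of Proposition \ref{proposition:sigma}, observing that both displayed sentences are again universal (i.e., $\Pi_1$) first-order sentences in the language of MV-algebras. Universal sentences are preserved under substructures, and by Di Nola's representation theorem every MV-chain $C$ embeds into some ultrapower $\interval^{\ast}$ of the standard MV-algebra. By \L o\'s's ultraproduct theorem, any first-order sentence true in $\interval$ is true in $\interval^{\ast}$. Thus it suffices to verify the conjunction of the two sentences in the standard MV-algebra $\interval$.

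This reduces everything to a finite case analysis computing $\sigma^{\ast}(x)$ explicitly on $\interval$. Using $a\oplus b=\min(1,a+b)$ and $a\odot b=\max(0,a+b-1)$, a direct computation gives the piecewise formula
\[
\sigma^{\ast}(x)=\min(1,\max(0,3x-1))=
\begin{cases}
0 & \text{if } 0\le x\le 1/3,\\
3x-1 & \text{if } 1/3\le x\le 2/3,\\
1 & \text{if } 2/3\le x\le 1
\end{cases}
\]
(cf.\ Figure \ref{figure:spezzate}; the three regimes correspond to whether each of the summands $x\odot(x\oplus x)$ and $x\odot x$ is $0$ or not). One then checks the two implications directly: if $0<x<1/2=\neg x$, then either $\sigma^{\ast}(x)=0<x$ or $\sigma^{\ast}(x)=3x-1<x$ (the latter being equivalent to $2x<1$); symmetrically, if $\neg x=1-x<x<1$, then either $\sigma^{\ast}(x)=3x-1>x$ (equivalent to $2x>1$) or $\sigma^{\ast}(x)=1>x$.

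Once the two sentences are verified in $\interval$, \L o\'s's theorem transfers them to every ultrapower $\interval^{\ast}$ of $\interval$. Because each sentence is of the form $\forall x\,(\varphi(x)\to\psi(x))$ with $\varphi,\psi$ quantifier-free, truth is inherited by substructures; hence by Di Nola's representation theorem the sentences hold in any MV-chain $C$, as required.

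No serious obstacle is expected: the real content is the explicit computation of $\sigma^{\ast}$ on $\interval$, already essentially recorded in Proposition \ref{proposition:transformation}(i) and Figure \ref{figure:spezzate}. The only delicate point is to keep the inequalities \emph{strict} at the boundary values $x=1/3,1/2,2/3$, which is handled automatically by the hypotheses $0<x$, $x<\neg x$ (resp.\ $\neg x<x$, $x<1$) that exclude the fixed points $0,1/2,1$ of $\sigma^{\ast}$.
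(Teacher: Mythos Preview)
Your proof is correct and follows exactly the paper's approach: verify the sentence in the standard MV-algebra $\interval$, transfer to ultrapowers via \L o\'s, then descend to an arbitrary MV-chain via Di Nola's embedding and preservation of universal sentences under substructures. Your explicit formula $\sigma^*(x)=\min(1,\max(0,3x-1))$ is in fact the correct one (check $x=1/4$ or $x=1/2$); the expression $\min(1,\max(0,3x))$ in Proposition~\ref{proposition:transformation}(i) and the caption of Figure~\ref{figure:spezzate} is a typo in the paper, and your computation silently repairs it.
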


\begin{proof} It is enough to deal with the first sentence, 
denoted  $\chi$.
First of all, observe that $\chi$ is  a universal
sentence of first-order logic in the language of MV-algebras:
Thus, ``$0 < x$'' means  ``$\,\,\mathsf{NOT}\,(0=x)$''. 
 Also,  ``$x<\neg x$'' means
``$(x\leq \neg x)\,\,\mathsf{AND}\,\,\mathsf{NOT}\,\,\,(x=\neg x)$'', i.e.,
 ``$(\neg x \oplus \neg x=1)
\,\,\mathsf{AND}\,\,\mathsf{NOT}\,\,\,(x=\neg x)$''.  Similarly, ``$x_\sigma <x$'' means
``$\mathsf{NOT}(x_\sigma = x)\,\,\,\mathsf{AND}\,\,\,(\neg x_\sigma \oplus x=1) $''.
As we have seen, the MV-term
$x_\sigma$ is definable from $x$ and the MV-algebraic
operations.
Arguing as in the proof of Proposition
\ref{proposition:sigma},  
$\chi$  is satisfied by the standard
MV-algebra. By \L o\'s theorem, $\chi$ is satisfied by
any ultrapower $\interval^*$, whence it is satisfied
by   $C$, because $C$ can be embedded into
some ultrapower of $\interval$, by Di Nola's theorem.
\end{proof}

\begin{proposition}
\label{proposition:order}
For any MV-algebra $B$,
 $\sqsubseteq$ is a 
partial order relation on $B$.
\end{proposition}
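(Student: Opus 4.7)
The plan is to reduce everything to the totally ordered quotients $B/P$ for $P \in \Spec(B)$, using Proposition~\ref{proposition:prime}, and at the end to invoke the fact that the intersection of all prime ideals of an MV-algebra is $\{0\}$ (\cite[Corollary 1.2.14]{cigdotmun}) in order to separate distinct elements. Reflexivity is immediate: both implications in the definition of $x \sqsubseteq x$ collapse to $x/P \leq x/P$ and $x/P \geq x/P$.

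For transitivity, suppose $x \sqsubseteq y$ and $y \sqsubseteq z$ and fix $P \in \Spec(B)$. If $z/P < \neg z/P$, then $y \sqsubseteq z$ yields $y/P \leq z/P$, and applying $\neg$ (which reverses order in the chain $B/P$) gives $\neg z/P \leq \neg y/P$; chaining, $y/P \leq z/P < \neg z/P \leq \neg y/P$, so $y/P < \neg y/P$ strictly. Then $x \sqsubseteq y$ yields $x/P \leq y/P \leq z/P$, as required. The case $z/P > \neg z/P$ is symmetric, and when $z/P = \neg z/P$ there is nothing to check.

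For antisymmetry, assume $x \sqsubseteq y$ and $y \sqsubseteq x$ but $x \neq y$. By \cite[Corollary 1.2.14]{cigdotmun} some prime $P$ separates $x$ and $y$, and since $B/P$ is a chain we may assume $x/P < y/P$. If $y/P > \neg y/P$, then $x \sqsubseteq y$ demands $x/P \geq y/P$, contradicting $x/P < y/P$. Otherwise $y/P \leq \neg y/P$; combined with $x/P < y/P$ and $\neg y/P < \neg x/P$ (apply $\neg$ to $x/P < y/P$) this gives $x/P < y/P \leq \neg y/P < \neg x/P$, so in particular $x/P < \neg x/P$ strictly. Then $y \sqsubseteq x$ forces $y/P \leq x/P$, again contradicting $x/P < y/P$.

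The only delicate point is the subcase $y/P = \neg y/P$ in antisymmetry, where neither implication in $x \sqsubseteq y$ is triggered and so that hypothesis alone gives no information about $x/P$. The remedy, as in the argument above, is that the strict inequality $x/P < y/P$ combined with $y/P \leq \neg y/P$ still propagates to the strict inequality $x/P < \neg x/P$, which activates the $y \sqsubseteq x$ clause and closes the contradiction.
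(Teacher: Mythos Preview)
Your proof is correct and follows essentially the same approach as the paper: reduce to the totally ordered quotients $B/P$, use the order-reversing property of $\neg$, and for antisymmetry invoke the fact that the intersection of all prime ideals is $\{0\}$. Your presentation is in fact somewhat cleaner---you prove transitivity directly rather than by contradiction, and your antisymmetry argument handles the cases more uniformly (splitting only on the position of $y/P$ relative to $\neg y/P$), whereas the paper enumerates five separate cases on the joint positions of $x/P$ and $y/P$ relative to their negations.
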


\begin{proof} Reflexivity is trivial. To verify transitivity,
let us assume $x\sqsubseteq y\sqsubseteq z$ but
$x\sqsubseteq z $ fails (absurdum hypothesis).
There is a prime ideal $P$ of $B$ such that, without loss of
generality, $x/P < \neg x/P$ but $z/P\nleq x/P$, whence 
$$z/P > x/P,$$
because $B/P$ is totally ordered,
by Proposiiton \ref{proposition:prime}. 
 From   $x\sqsubseteq y$ we have  $y/P\leq x/P $.
Thus,  by the contrapositive property,
  (\cite[Lemma 1.1.4(i)]{cigdotmun}), 
$\neg y/P \geq \neg x/P>x/P\geq y/P$. 
From $y\sqsubseteq z$ we now get
$z/P\leq y/P\leq x/P$, a contradiction.

\smallskip
To check the antisymmetry property, suppose
\begin{equation}
\label{equation:anti}
x\sqsubseteq y\sqsubseteq x
\end{equation}
 but $x\not=y$, (absurdum hypothesis).
Thus  $\dist(x,y)\not=0$, where   
\begin{equation}
\label{equation:distance}
\dist(x,y)=(x\odot\neg y)\oplus(y\odot \neg x),
\end{equation}
is  Chang's {\it distance function},
  \cite[p.477]{cha}, \cite[Definition 1.2.4]{cigdotmun}.
By  Proposiiton \ref{proposition:prime} there is
a prime ideal $P$ of $B$ such that   $\dist(x/P,\,  y/P)\not=0$, i.e.,
$x/P\not=y/P.$ We now argue by cases:

\smallskip
If   $x/P<\neg x/P$ and $y/P<\neg y/P$
then from \eqref{equation:anti} we obtain
$x/P\leq y/P\leq x/P$, whence  $x/P=y/P$,
a contradiction.

\smallskip
If   $x/P>\neg x/P$ and $y/P>\neg y/P$ 
we similarly obtain a contradiction with  $x/P\not=y/P.$

\smallskip
If  $x/P < \neg x/P$ and
$y/P>\neg y/P$, combining 
  \eqref{equation:anti}  with
  the contrapositive property
   \cite[Lemma 1.1.4(i)]{cigdotmun},
  we obtain
$y/P\leq x/P < \neg x/P \leq \neg y/P$, whence 
$y/P<\neg y/P$, which is impossible. 

\smallskip
If  $x/P=\neg x/P$ and $y/P=\neg y/P$ 
then an easy verification similarly shows that
$x/P=y/P$, another contradiction.

\smallskip
Without loss of generality  the last possible case is  
$x/P=\neg x/P$ and $y/P< \neg y/P$.
Then by \eqref{equation:anti},  $x/P\leq y/P$, whence
$\neg x/P=x/P \leq y/P< \neg y/P$.   Again by
  the contrapositive property 
\cite[Lemma 1.1.4(i)]{cigdotmun}, the two inequalities
$x/P \leq y/P$ and $\neg x/P < \neg y/P$ are
contradictory.

\smallskip
 Having thus obtained a contradiction in all possible cases,
we have completed  the proof.
\end{proof}

\subsection*{End of the proof of Theorem \ref{theorem:extension}}

Trivially,  every   $b\in \Boole(E(A))$ satisfies
$b/P\in \{0,1\}\subseteq E(A)/P$ for every prime ideal 
$P$ of $E(A)$. Thus 
$b$ is  $ \sqsubseteq$-minimal. 
Conversely,  for any element $b$ of   $E(A)$
we will prove  
\begin{equation}
\mbox{If}\,\,\, 
b\notin \Boole(E(A))\mbox{\,\,\,then\,\,\,} b \mbox{ is not 
 $\sqsubseteq$-minimal.}
\end{equation}
By way of contradiction assume $b\notin \Boole(E(A))$ and
$b$ is $\sqsubseteq$-minimal. 
Following   \cite[Definition 4.14]{mun11},  
for any MV-algebra $B$  let
\begin{equation}
\label{equation:maximal}
\boldsymbol{\mu}(B)
\end{equation}
denote the maximal spectral space of $B$
equipped  with the hull-kernel (Zariski) topology inherited from
$\Spec(B)$ by restriction.
By \cite[Proposition 4.15]{mun11},\,\, $\boldsymbol{\mu}(B)$
is a nonempty  compact Hausdorff subspace
of the prime spectral space $\Spec(B)$.
For each $M\in \boldsymbol{\mu}(B)$ there is
a unique embedding of $B/M$ into the standard
MV-algebra  $\interval$, \cite[Theorem 4.16]{mun11}.
So for each  $a\in B$ there is a unique
$\alpha \in \mathbb R$ such that  $a/M = \alpha$.
We will throughout identify  $a/M$ and $\alpha$ without
fear of ambiguity.

\medskip
\noindent
{\it Claim 1:}  For all  $M\in \boldsymbol{\mu}(E(A))$
we have $b/M\in \{0,1/2,1\}$.

By way of contradiction, suppose  $0<b/M<1/2.$  (The case
$1/2<b/M<1$ is similar).  By Proposition \ref{proposition:sigma-bis},
in the MV-chain  $E(A)/M\subseteq \interval$ we have 
$b_\sigma/M < b/M$, whence  $b_\sigma\not =b.$ On the other hand,
by Proposition \ref{proposition:sigma}, $b_\sigma \sqsubseteq b.$
It follows that $b$ is not $\sqsubseteq$-minimal, a contradiction.

\medskip
\noindent
{\it Claim 2:} For all  $M\in \boldsymbol{\mu}(E(A))$, if
$b/M=0$ then $b/P=0$ for all $P\in \Spec(E(A))$ 
contained in $M$.

Otherwise (absurdum hypothesis), there is a
maximal ideal $M$ with  $b/M=0$,  and  a prime
ideal $P\subseteq M$ with $b/P>0$. 
Thus
$$
0 < b/P<\neg b/P.
$$
(For otherwise $b/P\oplus b/P=1$ whence a fortiori
$b/M\oplus b/M=1$, and $b/M\geq 1/2$, which is impossible.)  
By Proposition \ref{proposition:sigma-bis},
in  the MV-chain  $E(A)/M\subseteq \interval$
we have
 $
(b/P)_\sigma < b/P,
$
 whence by  \eqref{equation:quotient}, 
$b_\sigma/P=  (b/P)_\sigma \not=  b/P
$
whence
$b_\sigma\not=b.$
By Proposition \ref{proposition:sigma},
$b_\sigma\sqsubseteq b$,
thus contradicting  the $\sqsubseteq$-minimality of $b$.

\medskip
Similarly,

\medskip
\noindent
{\it Claim 3:} For all  $M\in \boldsymbol{\mu}(E(A))$, if
$b/M=1$ then $b/P=1$  for all $P\in \Spec(E(A))$ 
contained in $M$.

\medskip
\noindent
{\it Claim 4:} For all  $M\in \boldsymbol{\mu}(E(A))$, if
$b/M=1/2$  (i.e., $b/M=\neg b/M$) then 
$b/P= \neg b/P)$,
 for all $P\in \Spec(E(A))$ 
contained in $M$.

\smallskip
Otherwise (absurdum hypothesis), there is a 
maximal ideal $M$ and a prime ideal $P\subseteq M$
with $b/M=1/2$ and $b/P\not= \neg b/P$, say without loss
of generality
$b/P<\neg b/P$ in the MV-chain $E(A)/P$.
If  $b/P=0$, i.e., if $b\in P$, then 
$b\in M$ whence $b/M=0$, which is impossible.  So
$b/P>0$.  By Proposition \ref{proposition:sigma-bis}, in
the  MV-chain   $E(A)/P$ we have     
$
(b/P)_\sigma < b/P,
$
whence
$
b_\sigma/P=  (b/P)_\sigma \not=  b/P
$
and
$b_\sigma\not=b.
$
By  Proposition \ref{proposition:sigma},
$b_\sigma\sqsubseteq b$,
again contradicting  the $\sqsubseteq$-minimality of $b$.

\smallskip
We have thus shown that   every prime ideal $P$ of $E(A)$
belongs to precisely one of the following three sets:
$$
Y_0=\{P\in \Spec(E(A)) \mid  b\in P\},\,\,\,
Y_1=\{P\in \Spec(E(A)) \mid  \neg b\in P\}
$$
$$
Y_{1/2}=\{P\in \Spec(E(A)) \mid 
b/P=\neg b/P, \mbox{ i.e.,  }\dist(b/P, \neg b/P)\in P \},
$$
where
$
\dist(x,y)=(x\odot\neg y)\oplus(y\odot \neg x)
$
is  Chang's distance function, (see \eqref{equation:distance}).

To conclude, let  $c\in E(A)$ be defined by
$
c= b\odot b = \neg(\neg b\oplus \neg b).
$
For each  $P\in \Spec(E(A))$ the element  
$c/P= b/P\odot b/P$  equals 1 if
$b/P=1$, equals 
0 if $b/P=0,$ and 
equals 0 if   $b/P = \neg b/P.$
 It follows that $c\sqsubseteq b,$ because, as we
have just seen, the prime quotients of  $b/P$ 
have no other possibilities.
Our hypothesis 
$b\notin \Boole(E(A))$ implies $Y_{1/2}\not=\emptyset$,
whence there is
prime ideal $R$ of $E(A)$ such that $b/R=\neg b/R$.
Since  $c/R=0$, then  $c\not= b,$ a contradiction with
the  $\sqsubseteq$-minimality of
$b$. 
The  proof of Theorem \ref{theorem:extension} is complete.
 \hfill $\Box$  
 
\bigskip
 
    Figure \ref{figure:example} is an illustration of the
    equivalence classes   $[p]$ 
   and   $[p]_\sigma$ for $p$ a projection
   in the
   AF$\ell$-algebra  $\mathfrak M_2$
   defined by $E(\mathfrak M_2)$  = the free
   two-generator MV-algebra  $Free_2$ consisting of all
   McNaughton functions over the unit real square $\interval^2.$
  \,\,\, $\mathfrak M_2$ is well defined by Theorem  \ref{theorem:multi}(iv).    
As  $n$ tends to $\infty$, letting
   $\varsigma(n)=\sigma\circ\dots\circ\sigma
   \,\,\,\,(n\,\, {\rm times})$ the grey zone in 
    $[p]_{\varsigma(n)}$ gets thinner and thinner,
    and the density plot of   $[p]_{\varsigma(n)}$
   is almost everywhere white or black.

     \begin{figure}
    \begin{center}
    \includegraphics[height=9.96cm]{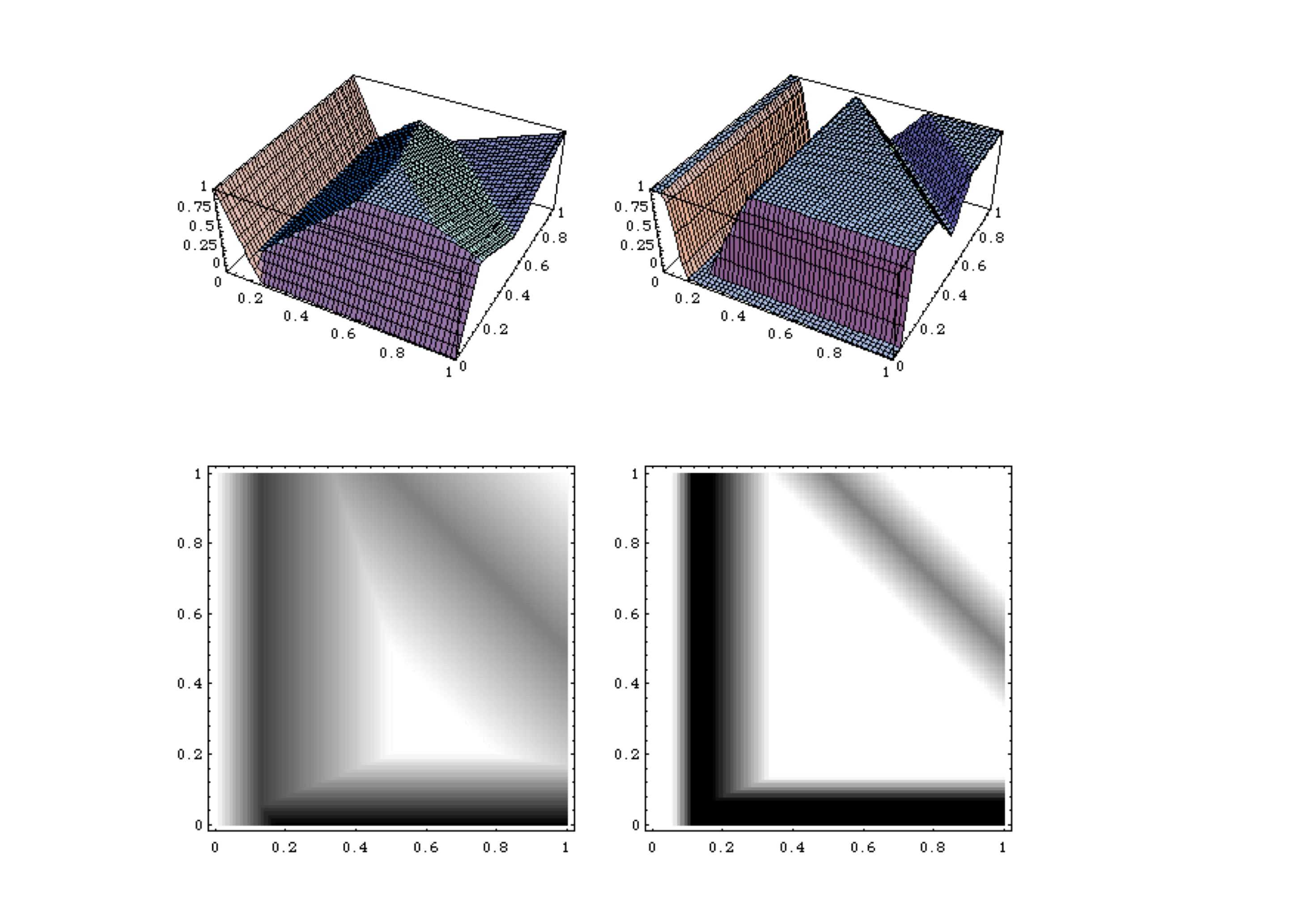}
    \end{center}
        \caption{\small  Left column: the graph and the
        density plot of
        the  Murray-von Neumann equivalence class
  $[p]$ of a projection
   of  the AF$\ell$-algebra  $\mathfrak M_2$
   defined by $E(\mathfrak M_2)$ = the free
   two-generator MV-algebra  $Free_2$. 
Right column: the graph and the density plot of 
the transformed equivalence class   
$[p]_\sigma\in E(\mathfrak M_2)$.
}
    \label{figure:example}
   \end{figure}

\section{The special case of 
liminary C*-algebras with boolean spectrum}
In this section we consider a class of C*-algebras
whose central projections have a particularly
simple  realization.

As the reader will recall, a
totally disconnected
compact Hausdorff
space is said to be {\it boolean}. 

\begin{theorem}
\label{theorem:equivalent-conditions} 
For any  (always unital and separable)
liminary C*-algebra $A$ the following conditions
are equivalent:
\begin{itemize}
\item[(i)]  $A$ has a boolean (primitive) spectrum.

\smallskip
\item[(ii)] $A$ is an AF$\ell$-algebra.
\end{itemize}
\end{theorem}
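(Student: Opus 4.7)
The plan is to prove the two implications separately. The implication (ii) $\Rightarrow$ (i) will rest on Section~2's MV-algebraic machinery, while (i) $\Rightarrow$ (ii) will require the continuous-field structure of liminary C*-algebras.

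For (ii) $\Rightarrow$ (i): Unitality together with liminarity forces each primitive quotient $A/\mathfrak p$ to be a unital simple sub-C*-algebra of $\mathcal K(\mathcal H_{\mathfrak p})$, which makes $\mathcal H_{\mathfrak p}$ finite-dimensional, so $A/\mathfrak p\cong M_{n(\mathfrak p)}$ for some positive integer $n(\mathfrak p)$. By Corollary \ref{corollary:spectral}(iii)-(iv), the MV-algebra $E(A)/\eta(\mathfrak p)\cong E(M_{n(\mathfrak p)})$ is the simple \luk\ chain $L_{n(\mathfrak p)+1}$. Since by Corollary \ref{corollary:spectral}(i)-(ii) every prime ideal of $E(A)$ arises as $\eta(\mathfrak p)$ for a unique $\mathfrak p\in\prim(A)$, every prime ideal of $E(A)$ is maximal, i.e.\ $E(A)$ is hyperarchimedean. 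The classical MV-algebraic fact that the prime spectrum of a hyperarchimedean MV-algebra is boolean (see e.g.\ \cite[Chapter 6]{cigdotmun}), transported through the homeomorphism of Corollary \ref{corollary:spectral}(ii), gives that $\prim(A)$ is boolean.

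For (i) $\Rightarrow$ (ii), I would first establish that $A$ is AF. Liminarity and unitality again give fibres $A/\mathfrak p\cong M_{n(\mathfrak p)}$, and since $X=\prim(A)$ is Hausdorff, the Dauns--Hofmann centre representation realises $A$ as the C*-algebra of continuous sections of a canonical continuous field of the matrix algebras $M_{n(\mathfrak p)}$ over $X$. Total disconnectedness of $X$ then allows norm approximation of any finite subset of $A$ by locally constant sections: using lower semi-continuity of the dimension function $n$ and the clopen basis of $X$, one produces a finite clopen partition $X=U_1\sqcup\dots\sqcup U_m$ over which the continuous field trivialises, $n$ is bounded, and the prescribed sections are close to constant matrix-valued functions. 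Bratteli's finite-dimensional approximation criterion then yields that $A$ is AF.

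To conclude the lattice property, observe that on each clopen piece where $n$ is constant equal to $k$, the algebra is locally $C(U,M_k)$, whose Murray-von Neumann equivalence classes of projections are the continuous $\{0,1,\dots,k\}$-valued functions on $U$, forming a lattice under pointwise operations; patching over the clopen partition presents $K_0(A)$ as a sub-$\ell$-group of a product of $C(U_i,\mathbb Z)$'s, so $K_0(A)$ is lattice-ordered, and by Theorem \ref{theorem:multi}(v), $A$ is an AF$\ell$-algebra. The hardest step will be the bundle-theoretic argument in (i) $\Rightarrow$ (ii): rigorously upgrading the lower semi-continuity of the (possibly unbounded) dimension function $n$ on the boolean space $X$ to a clopen decomposition that simultaneously trivialises the continuous field, bounds $n$ on each piece, and is compatible with approximating finitely many prescribed sections in norm.
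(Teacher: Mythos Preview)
Your (ii)$\Rightarrow$(i) argument is essentially the paper's: liminarity makes every primitive ideal maximal, Corollary~\ref{corollary:spectral} transports this to $E(A)$, so every prime ideal of $E(A)$ is maximal, $E(A)$ is hyperarchimedean, its (maximal) spectrum is boolean, and this pulls back to $\prim(A)$.

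For (i)$\Rightarrow$(ii) the paper simply invokes \cite{braell} for the AF property and \cite{ellmun} for the lattice order of $K_0(A)$. Your attempt to redo these via continuous-field approximation has a genuine gap. You claim that lower semi-continuity of the fibre dimension $n$ allows a \emph{finite} clopen partition of $X=\prim(A)$ with $n$ bounded on each piece. This is false in general. Take $X=\{0\}\cup\{1/k:k\geq 1\}$ with fibre $M_k$ at $1/k$ and $\mathbb C$ at $0$, realised as $A=\{(a_k)\in\prod_k M_k: \|a_k-\lambda 1_k\|\to 0\ \text{for some}\ \lambda\in\mathbb C\}$. This $A$ is unital, separable, liminary with boolean spectrum $X$; the function $n$ is lower semi-continuous but unbounded, and every clopen containing $0$ is cofinite, so $n$ is unbounded on it. The algebra \emph{is} AF, via $A_m\cong M_1\times\cdots\times M_m\times\mathbb C$, but the approximating finite-dimensional subalgebras do not arise from trivialising the bundle over clopens with bounded fibres: near $0$ one must use that sections are \emph{approximately scalar}, not that the fibres are small. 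The Bratteli--Elliott argument in \cite{braell} handles exactly this subtlety, which your sketch does not.

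Your lattice step inherits the same problem (the clopen partition you patch over need not exist) and is also circular as phrased: asserting that $K_0(A)$ sits as a ``sub-$\ell$-group'' of a product of $C(U_i,\mathbb Z)$'s presupposes closure under $\wedge,\vee$, which is precisely what you must prove; a mere order-embedding into a lattice-ordered group does not force the image to be a sublattice.
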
 

\begin{proof}
(i)$\Rightarrow$(ii)   From 
 \cite[Step(i), p.80]{braell} it follows that 
$A$ is an AF-algebra. Now by
  \cite[Theorem 1]{ellmun},
  $K_0(A)$ is lattice-ordered.
  Finally,  by Theorem \ref{theorem:multi}(iv)-(v), 
 $A$ is an AF$\ell$-algebra.

\smallskip
(ii)$\Rightarrow$(i)  
  Since $A$ is liminary, all  its primitive ideals are
  maximal,  \cite[4.1.11(ii), 4.2.3]{dix}.
   Since $A$ is an AF$\ell$-algebra, 
   by Corollary \ref{corollary:spectral}(i)
every prime ideal  of  $E(A)$ is 
 maximal.   In symbols, by
 \eqref{equation:spec} and \eqref{equation:maximal},
 \begin{equation}
 \label{equation:prime=maximal}
 \Spec(E(A))=\boldsymbol{\mu}(E(A)).
 \end{equation}
 This is a necessary and sufficient condition
 for $E(A)$ to be {\it hyperarchimedean}, 
 \cite[Theorem 6.3.2]{cigdotmun}. Since
  the intersection of all prime ideals of $E(A)$
  is zero, (\cite[Corollary 1.2.14]{cigdotmun}),
  then   $E(A)$ is {\it semisimple}, \cite[p.72]{cigdotmun}.
By   \cite[Proposition 1.2.10]{cigdotmun}, for
 every  prime ideal $P$ of $E(A)$, the quotient 
 $E(A)/P$ has no nonzero ideals,  because
 $P$ is maximal.  Equivalently,  
 $E(A)/P$ is {\it simple},  \cite[Theorem 3.5.1]{cigdotmun}.
By \cite[Proposition 4.15]{mun11},\,\,\, $\boldsymbol{\mu}(E(A))$
is a nonempty  compact Hausdorff space.
By  \cite[Theorem 4.16]{mun11}, $E(A)$ is isomorphic
to a separating MV-algebra of continuous  
$\interval$-valued functions 
on $\boldsymbol{\mu}(E(A))$.
Since  $E(A)$ is  
   hyperarchimedean, 
   from   \cite[Corollary 6.3.5]{cigdotmun}
   it follows that
$\boldsymbol{\mu}(E(A))$  is a {\it boolean space.}  
By Corollary \ref{corollary:spectral}(ii),
  $\prim(A)$ is boolean. 
\end{proof}
   
 The following theorem provides a useful 
 representation of $E(A)$ as an MV-algebra of
 continuous rational-valued functions over $\prim(A)$:
  	 
\begin{theorem}
\label{theorem:equivalence-unified}   
Suppose the liminary C*-algebra
$A$ satisfies the two equivalent conditions  of Theorem
\ref{theorem:equivalent-conditions}.
For every projection $q$ of  $A$ let
the  {\em  dimension map }
$d_q: \prim(A) \rightarrow \mathbb Q\cap \interval$  be defined by  
\begin{equation}
\label{equation:dimension}
d_q(\mathfrak p) =  \frac{\dim\,\,\range \,\, \pi(q)}
{\dim \pi}\,\,\,\,(\mathfrak p\in \prim(A)),
\end{equation}
where $\pi$   is an arbitrary
 irreducible representation 
of $ A$  such that   ker$(\pi) = \mathfrak p.$
 
\begin{itemize}

\item[(I)]  
The map   
 $
  [q]\in E(A)\mapsto d_q\in  \interval^{\prim(A)}
 $  
is an isomorphism of   $E(A)$ onto 
the  MV-algebra   of  dimension
maps of $A$, with the pointwise operations 
of the standard
MV-algebra $\interval$.

\smallskip
\item[(II)]
Each  dimension map  is 
continuous  and has  a finite range.

\smallskip
\item[(III)] (Separation)
For any two distinct $\mathfrak p,\mathfrak q\in \prim(A)$
there is  $p \in \proj(A)$ such that $d_p(\mathfrak p)=0$ and
$d_p(\mathfrak q)=1$.   (Equivalently,  
there is $r\in \proj(A)$ with $d_r(\mathfrak p)\not=
d_r(\mathfrak q).$)

\end{itemize}
\end{theorem}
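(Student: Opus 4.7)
The plan is to identify the map $[q]\mapsto d_q$ with the semisimple functional representation of the MV-algebra $E(A)$ on its maximal spectral space, transported to $\prim(A)$ via the homeomorphism $\eta$ of Corollary \ref{corollary:spectral}(ii). First I would verify well-definedness: since $A$ is unital and liminary, each $A/\mathfrak p$ is a unital elementary C*-algebra, hence isomorphic to $M_{n(\mathfrak p)}(\mathbb{C})$ with $n(\mathfrak p)<\infty$. Any two irreducible representations with kernel $\mathfrak p$ are unitarily equivalent, so $\dim\range\pi(q)$ is intrinsic to $\mathfrak p$; and $q\sim q'$ forces $\pi(q)\sim\pi(q')$ in $M_{n(\mathfrak p)}(\mathbb{C})$, which have equal rank. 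Thus $d_q$ depends only on $[q]$.

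For (I), identify $E(M_{n(\mathfrak p)}(\mathbb{C}))$ with the finite MV-chain $\{0,1/n(\mathfrak p),\dots,1\}\subseteq\interval$ via normalized rank. Corollary \ref{corollary:spectral}(iv) supplies an isomorphism $E(A)/\eta(\mathfrak p)\cong E(A/\mathfrak p)$, and chasing definitions shows that $d_q(\mathfrak p)$ is the image of $[q]$ under the composite
\begin{equation*}
E(A)\twoheadrightarrow E(A)/\eta(\mathfrak p)\xrightarrow{\cong}E(A/\mathfrak p)\hookrightarrow\interval.
\end{equation*}
The joint map $[q]\mapsto d_q$ is then the product MV-homomorphism $E(A)\to\interval^{\prim(A)}$ induced by these quotients, whose image is by definition the MV-algebra of dimension maps. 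Injectivity is immediate from semisimplicity: the intersection of the prime ideals of $E(A)$ is $\{0\}$ by \cite[Corollary 1.2.14]{cigdotmun}, as already used in the proof of Theorem \ref{theorem:equivalent-conditions}.

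For (II), continuity of $d_q$ is built into the semisimple representation theorem \cite[Theorem 4.16]{mun11}: every element of $E(A)$ defines a continuous $\interval$-valued function on $\boldsymbol{\mu}(E(A))\cong\prim(A)$. For finite range I would use the AF structure essentially: the projection $q$ lies in some finite-dimensional subalgebra $A_k\cong\bigoplus_{i=1}^{m_k}M_{d_i}(\mathbb{C})$, so $[q]$ sits inside the finite MV-algebra $E(A_k)\cong\prod_{i=1}^{m_k}\{0,1/d_i,\dots,1\}$. Each evaluation $[q']\mapsto d_{q'}(\mathfrak p)$ restricts on $E(A_k)$ to an MV-homomorphism $E(A_k)\to\interval$; since $E(A_k)$ is a finite product of finite simple chains, there are at most $m_k$ such homomorphisms (one per simple quotient, each with a unique embedding into $\interval$). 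Hence $d_q(\mathfrak p)$ takes at most $m_k$ distinct values as $\mathfrak p$ varies.

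For (III), the weaker form (separation by some $r$) is immediate from injectivity in (I). To sharpen it to a central projection with values $0$ and $1$, I would exploit that $E(A)$ is hyperarchimedean (established in the proof of Theorem \ref{theorem:equivalent-conditions}). Given $\mathfrak p\neq\mathfrak q$ in $\prim(A)$, pick $a\in\eta(\mathfrak p)\setminus\eta(\mathfrak q)$, which exists because these are distinct maximal ideals of $E(A)$. By the standard characterization of hyperarchimedean MV-algebras \cite[Theorem 6.3.2]{cigdotmun}, there is $n\ge 1$ with $(n+1)a=na$, so $na\in\Boole(E(A))$. Then $(na)/\eta(\mathfrak p)=0$, while $(na)/\eta(\mathfrak q)\in\{0,1\}$ is positive because $a/\eta(\mathfrak q)>0$, forcing $(na)/\eta(\mathfrak q)=1$. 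By Theorem \ref{theorem:threeconditions}, $na=[p]$ for a central projection $p\in A$, giving $d_p(\mathfrak p)=0$ and $d_p(\mathfrak q)=1$. I expect the finite-range clause of (II) to be the main obstacle, since it is the only step that relies essentially on the AF structure rather than on purely MV-algebraic formalism; the bookkeeping among Murray--von Neumann quotients, MV-algebraic quotients $E(A)/\eta(\mathfrak p)$, and the functional representation over $\boldsymbol{\mu}(E(A))$ will require careful use of Corollary \ref{corollary:spectral} at each step.
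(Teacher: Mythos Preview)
Your overall plan---identifying $[q]\mapsto d_q$ with the functional representation of the semisimple MV-algebra $E(A)$ transported along $\eta$---is sound and in fact cleaner than the paper in places. For (I) the paper proves injectivity by an analytic argument (lifting partial isometries across clopen patches of the boolean spectrum to show $p\sim q\Leftrightarrow p/\mathfrak p\sim q/\mathfrak p$ for all $\mathfrak p$), whereas your appeal to $\bigcap\Spec(E(A))=\{0\}$ is shorter and purely algebraic. Your argument for (III) via the hyperarchimedean condition $(n{+}1)a=na$ is also correct and a genuine alternative to the paper's route through McNaughton functions and the separating property of the representation in \cite[Theorem~4.16]{mun11}.

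The gap is in your finite-range argument for (II). The claim that ``each evaluation $[q']\mapsto d_{q'}(\mathfrak p)$ restricts on $E(A_k)$ to an MV-homomorphism $E(A_k)\to\interval$'' is false: the natural map $E(A_k)\to E(A)$ induced by the inclusion $A_k\hookrightarrow A$ is in general \emph{not} an MV-homomorphism. Take $A_k=\mathbb C\oplus\mathbb C$ embedded diagonally in $M_2$; then $E(A_k)=\{0,1\}^2$, and the projection $(1,0)$ has $d$-value $1/2$ at the unique primitive ideal. But $(1,0)\oplus(1,0)=(1,0)$ in $E(A_k)$, while $1/2\oplus 1/2=1$ in $\interval$. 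Once this homomorphism claim fails, your count ``at most $m_k$ such homomorphisms'' no longer bounds the range of $d_q$: writing $\pi|_{A_k}\cong\bigoplus_i\mu_i\pi_i$, one has $d_q(\mathfrak p)=\big(\sum_i\mu_i r_i\big)/\big(\sum_i\mu_i d_i\big)$, and nothing in your argument controls the set of multiplicity vectors $(\mu_1,\dots,\mu_{m_k})$ as $\mathfrak p$ varies. The paper obtains finite range instead from the hyperarchimedean property of $E(A)$ (already established in Theorem~\ref{theorem:equivalent-conditions}) via \cite[Lemma~4.6]{cigdubmun}; since you invoke hyperarchimedeanness anyway in (III), the simplest repair is to do the same here and drop the appeal to $A_k$.
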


\begin{proof}
Any  two irreducible representations of $A$
 with the same kernel are 
 equivalent  (\cite[Theorem 4.3.7(ii)]{dix}),  and
 finite-dimensional
 (\cite[4.7.14(b)]{dix}).   Thus  the  
actual choice 
of  the representation $\pi$
with    ker$(\pi) = \mathfrak p$
is  immaterial  in \eqref{equation:dimension}, and 
the   dimension map 
$d_q$ is well defined. 
For each  $\mathfrak p\in \prim(A)$ and irreducible representation
  $\pi$ of $A$ with $\ker \pi=
\mathfrak p$,  the quotient $A/\mathfrak p$
is simple, because  $\mathfrak p$ is maximal. So upon setting
$d=\dim \pi$ we  have
\begin{equation}
\label{equation:uno}
A/\mathfrak p \cong M_d, \mbox{ the C*-algebra of $d\times d$ complex matrices}.
\end{equation}

\medskip
(I)  We first show that
$d_q$ depends on $q$ only via its Murray-von Neumann
equivalence class $[q]$.
For the proof we prepare:

  \medskip
  \noindent
  {\it Claim 1:}
   For any  $p, q\in \proj(A)$,\,\,\,     
$$p\sim q\,\,\,\, \mbox{ iff }\,\,\,\,  p/\mathfrak p\sim  q/\mathfrak p
\,\,\, \mbox{ for each $\mathfrak p\in \prim(A)$.}
$$
 %
  Trivially,   $p\sim q$
implies  $p/\mathfrak p  \sim  q/\mathfrak p$ for all 
$\mathfrak p\in \prim(A)$.
Conversely, assuming
$p/\mathfrak p  \sim  q/\mathfrak p$ for all 
$\mathfrak p\in \prim(A)$, the continuity of the norm
ensures that  
a partial isometry connecting $p$
and  $q$  at a primitive ideal of $A$  can be 
lifted to a neighbourhood  $\mathcal N$, which
we may safely suppose to be clopen, because
 the topology of
$\prim(A)$ is boolean. Let $S$
be the set of   ideals $\mathfrak z \in   \prim(A)$ 
such that both $p/\mathfrak z$ and $q/\mathfrak z$ 
are nonzero. Since  $S$ is compact,  
a finite number of such clopen
neighbourhoods $\mathcal N_1,\ldots,\mathcal N_k$   
covers  $S.$  Without loss of generality,
 $\mathcal N_i\cap \mathcal N_j=\emptyset $  whenever   $i \not = j.$  
Adding up the associated partial isometries,  we obtain
  $p\sim q$. 
Our first claim is settled.

  \medskip
  \noindent
  {\it Claim 2:}
   For any  $p, q\in \proj(A)$ and $\mathfrak p\in \prim(A)$,\,\,\,     
$$
d_p(\mathfrak p)  =  d_q(\mathfrak p)
 \,\,\, \mbox{ iff }\,\,\,\,  p/\mathfrak p\sim  q/\mathfrak p.
$$

As we already know,  
 all primitive ideals of $A/\mathfrak p$ are maximal.
Moreover,  by \eqref{equation:uno},
 the finite-dimensional C*-algebra
 $A/\mathfrak p$  is  an isomorphic copy of
 the C*-algebra $M_d$ of  $d\times d$ 
complex matrices, with $d=\dim \pi$,
and $\pi$  
any irreducible representation with kernel $\mathfrak p.$
Thus $p/\mathfrak p\sim  q/\mathfrak p$ iff 
$\dim\,\, {\rm range}\,\, \pi(p)=
\dim\,\, {\rm range}\,\, \pi(q)$ iff
$d_p(\mathfrak p)=d_q(\mathfrak p)$.
Our second claim is settled.

\medskip
Claims 1 and 2 jointly show that
the map 
\begin{equation}
\label{equation:pretheta}
\theta\colon   [q]\in E(A)\mapsto d_q\in  \interval^{\prim (A)}
\end{equation} 
 is well defined. 
A direct inspection shows that $\theta$ is an
 isomorphism of $E(A)$ onto 
the  MV-algebra  $\theta(E(A))$ of  dimension
maps, with the pointwise operations of the standard
MV-algebra $\interval$,
\begin{equation}
\label{equation:theta}
\theta\colon  E(A) \,\,\,{\cong}\,\,\,
 \{\mbox{MV-algebra of  dimension maps on $\prim(A)$}\}.
\end{equation}

\smallskip
(II)  From
 \cite[Theorem 4.16]{mun11} 
we have  an isomorphism 
\begin{equation}
\label{equation:isomorphism}
^*\,\,\colon\,\, [q]\in E(A)\,\,\mapsto\,\, [q]^* \in 
 \interval^{\boldsymbol{\mu}(E(A))}
\end{equation}
 of   the
 semisimple MV-algebra
 $E(A)$ onto a {\it separating}  MV-algebra
of continuous $\interval$-valued functions over  the maximal spectral space
$\boldsymbol{\mu}(E(A))$. 
For every   $q\in \proj(A)$, the continuous function 
$[q]^*$ is  defined  by
the following stipulation:
For every maximal ideal $N \in \boldsymbol{\mu}(E(A))$,
\begin{equation}
\label{equation:quotient-value}
[q]^*(N)=
\mbox{ the only real number corresponding to }
\frac{[q]}{N}
\end{equation}
in the {\it unique} embedding of the simple MV-algebra $E(A)/N$ into
the standard MV-algebra  $\interval$.
Therefore, for all $\mathfrak p\in \prim(A)$, letting
$\eta(\mathfrak p)$ be the prime (automatically
 maximal) ideal of $E(A)$
corresponding to $\mathfrak p$ by Corollary \ref{corollary:spectral}(i),
we can write
\begin{equation}
\label{equation:lorena}
[q]^*(\eta(\mathfrak p))=
\mbox{ the only real number corresponding to }
\frac{[q]}{\eta(\mathfrak p)}.
\end{equation}
More generally, by \cite[Corollary 7.2.6]{cigdotmun}, for 
any  MV-algebra $B$
there is at most one embedding $\xi$ of   $B$
into the standard MV-algebra $\interval$.
  Thus whenever  such embedding 
  $\xi$ exists, 
  we may identify any $b\in B$ with the real
$\xi(b)\in \interval$    without fear of confusion.  
By    \cite[Corollary 3.5.4]{cigdotmun},
 for all $\mathfrak p\in \prim(A)$,
  the finite simple  MV-algebra $E(A/\mathfrak p)$,
as well as  its isomorphic copy
$E(A)/\eta(\mathfrak p)$, 
 are  uniquely embeddable onto a
subalgebra of the standard MV-algebra $\interval$.
Specifically,  let   $\pi$ be  an
irreducible representation
  of $A$ with $\ker \pi=
\mathfrak p$, and   
$
d=\dim \pi.
$
Let the MV-chain ${\mathsf L}_d$ be defined by  
$
{\mathsf L}_d=\left\{0,\frac{1}{d},\dots,\frac{d-1}{d},1\right\}.
$
Then from 
\begin{equation}
\label{equation:due}
E(A/\mathfrak p)\cong E(M_d)\cong
\mathsf L_d\subseteq \interval\,\,\,
\mbox{ and }\,\,\,
E(A)/\eta(\mathfrak p)\cong E(A/\mathfrak p)\cong \mathsf L_d
\subseteq \interval
\end{equation}
 we have unique embeddings of $E(A/\mathfrak p)$
 and $E(A)/\eta(\mathfrak p)$ into 
 $\mathsf L_d$.
For each $\mathfrak p\in \prim(A)$ and $q\in \proj(A)$,
recalling the definition of the isomorphism 
$E(A/\mathfrak p)\cong E(A)/\eta(\mathfrak p)$ in
Corollary \ref{corollary:spectral}(iii), 
we  can write

   \begin{eqnarray*}
 d_q(\mathfrak p)&=& \frac{\dim \range \,\, \pi(q)}
 {\dim \pi}
 \in {\mathsf L}_d,\,\,\,\mbox{ with } d=\dim \pi
 \,\,\,\mbox{ and }\ker\pi=\mathfrak p 
 \\[0.2cm]
{}&=&\mbox{the unique rational  in
${\mathsf L}_d$ corresponding to } 
[\pi(q)]  
\in E(M_d) 
\mbox{ by \eqref{equation:uno}} 
 \\[0.2cm]
{}&=&
\mbox{ the unique image 
in ${\mathsf L}_d$ of  } \left[\frac{q}{\mathfrak p}\right] \in
E(A/\mathfrak p), \,\,\, 
\mbox{ by \eqref{equation:due}}  
\\[0.2cm]
{}&=&
\mbox{ the unique image 
in ${\mathsf L}_d$ of  } 
 \frac{[q]}{\eta(\mathfrak p)}
\in \frac{E(A)}{\eta(\mathfrak p)}\,\,\,
\mbox{ in } {\mathsf L}_d, \mbox{ by \eqref{equation:due}}  
\\[0.2cm]
 {}&=&[q]^*(\eta(\mathfrak p)), \,\,\,
   \mbox{ by \eqref{equation:lorena}}.
  \end{eqnarray*}
The composite function
$d_q=[q]^*\circ\eta\colon \prim(A)\to \interval\cap\,\, \mathbb Q$ is continuous.
Since $E(A)^*$
  is hyperarchimedean,
  the range of $d_p$ is finite by 
   \cite[Lemma 4.6]{cigdubmun}. For
   all $q\in \proj(A)$,
    from \eqref{equation:pretheta}-\eqref{equation:theta}
    we get 
\begin{equation}
\label{equation:theta-bis}
   \theta([q])=d_q= [q]^*\circ\eta.
\end{equation}

   \bigskip
(III) First of all, the two separation  properties are
equivalent: for the nontrivial direction, let us
write   $d_r(\mathfrak p) < 
d_r(\mathfrak q)$.  Let $\mathcal O$ be
  an open interval such that 
  $d_r(\mathfrak p)< b <
d_r(\mathfrak q)$ for all $b\in \mathcal O$.   By  
 \cite[Lemma 3.1.9]{cigdotmun}, the free MV-algebra
$Free_1$ contains a McNaughton function
$\tau^*$ whose graph has three linear pieces as the
graph of  $\sigma^*$, with the additional property that
the non-constant linear piece of 
$\tau^*$ is nonzero only over  a nonempty open segment
contained in $\mathcal O.$ Then the composite function
 $\tau^*\circ d_r$ has value 0 at $\mathfrak p$
 and value 1 at  $\mathfrak q$.  Since  $\tau^*\circ d_r$
 is obtainable from $d_r$ by finitely many applications of
 the pointwise operations $\neg$ and $\oplus$, then
 $\tau^*\circ d_r$ is a  dimension map of $A$. 
 
 Having thus proved the equivalence of the two 
 separation properties,    the isomorphism  $^*$ in 
  \eqref{equation:isomorphism} maps $E(A)$ onto
the {\it separating}  MV-algebra
of continuous $\interval$-valued functions over  the maximal spectral space
$\boldsymbol{\mu}(E(A))$. By
\eqref{equation:isomorphism} and  \eqref{equation:theta-bis},
the MV-algebra of  dimension maps separates points. 
\end{proof}

\bigskip
In the light of 
Theorem \ref{theorem:equivalence-unified}, if
 $A$ satisfies  the two
equivalent conditions of Theorem \ref{theorem:equivalent-conditions},
identifying via $\eta$ the primitive ideal space
  $\prim(A)$ with  the maximal spectral space
$\boldsymbol{\mu}(E(A))$,
 we will henceforth  realize $E(A)$ as the
MV-algebra of  dimension maps  
\begin{equation}
\label{equation:ea}
\boxed{E(A)=\theta(E(A))=E(A)^*.}
\end{equation}

\medskip
\noindent
In particular, any  $f\in \Boole(E(A))$ will be identified
with a  $\{0,1\}$-valued  dimension map.

\begin{theorem}
\label{theorem:liminary}
Suppose the liminary C*-algebra
$A$ satisfies the two  equivalent conditions    of Theorem
\ref{theorem:equivalent-conditions}. 
We then have: 
 \begin{itemize}

 \smallskip
 \item[(i)] 
Every
finite subset of $E(A)$ generates a finite subalgebra
of $E(A).$ In other words, $E(A)$ is {\em locally finite}.

\smallskip
  \item[(ii)]    
  Every clopen   $W	\subseteq \prim(A)$
is the zeroset of some
$\{0,1\}$-valued  dimension map.

\smallskip
 \item[(iii)]  
 For each   dimension map   $d_p$  and rational     
 $\rho \in \interval$    there is a $\{0,1\}$-valued  dimension map      
 $b$    such that 
  $d_p^{-1}(r)=b^{-1}(0). $

%
%
%

\smallskip
\item[(iv)]  
Each  extremal state $s$ of $K_0(A)$ is {\em discrete}, in the sense
that $s(K_0(A))$ is a cyclic subgroup of $\mathbb R,\,\,\,\,$ 
\cite[p.70]{goo-ams}.

 \medskip
\item[(v)]
 $K_0(A)$ has general comparability, \cite[p.131]{goo-ams}.  
 
\end{itemize} 
\end{theorem}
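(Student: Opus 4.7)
My overall plan is to exploit two structural facts established earlier: that $E(A)$ is hyperarchimedean (from the proof of Theorem~\ref{theorem:equivalent-conditions}) and that via~\eqref{equation:ea} it is realised as a separating MV-algebra of continuous, finite-range, $\mathbb Q\cap\interval$-valued dimension maps on the Boolean space $\prim(A)=\boldsymbol{\mu}(E(A))$.

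Statements (i)--(iii) will then follow from standard MV-algebraic tools. For (i), given $f_1,\dots,f_n\in E(A)$, the common-fiber partition of $\prim(A)$ they induce is a finite clopen partition (each $f_i$ being continuous with finite range), so any element of the MV-subalgebra they generate is constant on each block and takes values in the finite MV-subalgebra of $\interval$ generated by $\bigcup_i\range f_i$; this concretely realises the classical fact (via \cite[Theorem 6.3.2]{cigdotmun}) that finitely generated hyperarchimedean MV-algebras are finite. For (ii), given a clopen $W\subseteq\prim(A)$, the separation property of Theorem~\ref{theorem:equivalence-unified}(III) furnishes, for each pair $(\mathfrak p,\mathfrak q)\in W\times(\prim(A)\setminus W)$, some $r\in\proj(A)$ with $d_r(\mathfrak p)\neq d_r(\mathfrak q)$; composing $d_r$ with a suitable $\{0,1\}$-valued McNaughton step function in $Free_1$ (obtained by an MV-term, just as $\sigma^*$ was in Proposition~\ref{proposition:transformation}) produces $b_{\mathfrak p\mathfrak q}\in\Boole(E(A))$ separating $\mathfrak p$ from $\mathfrak q$. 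A standard compactness argument on $\prim(A)$, combined with the Boolean $\vee$ and $\wedge$ operations, glues finitely many such $b_{\mathfrak p\mathfrak q}$ into $b\in\Boole(E(A))$ with $b^{-1}(0)=W$. Claim (iii) is then an immediate corollary of (ii) applied to the clopen set $d_p^{-1}(\rho)$.

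For (iv), the extremal states of $(K_0(A),[1_A])$ correspond under $\Gamma$ to the extremal states of $E(A)$, which for this semisimple MV-algebra are parametrised by the maximal ideals $M\in\boldsymbol{\mu}(E(A))$ via $s_M(a)=a/M$ under the unique embedding $E(A)/M\hookrightarrow\interval$. By~\eqref{equation:due} each $E(A)/M\cong\mathsf{L}_d$ for some integer $d\geq 1$, so $s_M(E(A))\subseteq\frac{1}{d}\mathbb Z$, and the extension of $s_M$ to $K_0(A)$ takes values in the cyclic subgroup $\frac{1}{d}\mathbb Z$ of $\mathbb R$, which is the required discreteness.

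For (v), I would lift the function representation of $E(A)$ to describe $K_0(A)$ as an $\ell$-group of continuous, locally constant $\mathbb Q$-valued functions on $\prim(A)$. For any $x,y\in K_0(A)$ the difference $x-y$ is locally constant with finite range (being an integer combination of elements of $E(A)$ on a common clopen refinement), so the set $W=\{\mathfrak p:x(\mathfrak p)\leq y(\mathfrak p)\}$ is clopen. By (ii), the function $e:=\chi_{\prim(A)\setminus W}\in\Boole(E(A))$ is a characteristic element of the unital $\ell$-group $(K_0(A),[1_A])$, and the splitting required by general comparability in the sense of \cite[p.131]{goo-ams} reads off pointwise from the definition of $W$. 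The main obstacle will be the careful matching of Goodearl's formulation of characteristic element and of general comparability to our function-theoretic setting, and verifying that $\Boole(E(A))\subseteq K_0(A)$ realises exactly the characteristic elements Goodearl requires; once that identification is cleanly in place, the Boolean abundance of clopen subsets of $\prim(A)$ and the finite-range property of elements of $K_0(A)$ make the splitting mechanical.
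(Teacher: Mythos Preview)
Your proposal is correct, and for parts (i)--(iv) it follows essentially the same lines as the paper: the paper proves (i) by a direct citation of the hyperarchimedean characterisation in \cite{cigdubmun}, proves (ii) by a separation-plus-compactness argument on $\prim(A)$ (pushing a dimension map to $\{0,1\}$ via repeated $\oplus$, which is a special case of your ``McNaughton step function'' composition), derives (iii) from (ii) exactly as you do, and proves (iv) by identifying extremal states with $\ell$-homomorphisms into $\mathbb R$ (Goodearl's Theorem 12.18) and then observing, just as you do, that the quotient by the kernel is the finite chain $\mathsf L_d$, whence the image lies in $\tfrac{1}{d}\mathbb Z$. One terminological caveat: there is no nonconstant $\{0,1\}$-valued McNaughton function on $\interval$, since McNaughton functions are continuous; what you want (and what the paper uses in the proof of Theorem~\ref{theorem:equivalence-unified}(III)) is a McNaughton function that is $\{0,1\}$-valued on the \emph{finite} range of $d_r$.

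For part (v) your route is genuinely more direct than the paper's. The paper, after reducing to $h,k\geq 0$, decomposes $h$ and $k$ into \emph{good sequences} $x_1,\dots,x_n$ and $y_1,\dots,y_n$ in $E(A)$ (via \cite[Proposition~3.1]{mun-jfa}) and then realises the set $X_1=\{N:h/N\leq k/N\}$ as the finite intersection $\bigcap_i (x_i\odot\neg y_i)^{-1}(0)$ of zerosets of dimension maps, hence clopen. You instead observe that once $K_0(A)$ is represented as an $\ell$-group of continuous $\mathbb Q$-valued functions with finite range (a fact both you and the paper invoke, the paper citing \cite[Theorems 3.8--3.9]{mun-jfa}), the inequality locus $\{x\leq y\}$ is immediately clopen because $x-y$ has finite range. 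Your argument is shorter and avoids the good-sequence machinery; the paper's argument has the advantage of exhibiting the characteristic element explicitly in terms of MV-operations on the good-sequence components, which ties more visibly into the $E(A)$-level description. Either way, the identification of $\Boole(E(A))$ with the characteristic elements of $K_0(A)$ that you flag as the main obstacle is already supplied by Theorem~\ref{theorem:threeconditions}(ii)$\Leftrightarrow$(iv), so that step is not in fact problematic.
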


\begin{proof}   
 (i) From
   \cite[Theorem 5.1(i)$\Leftrightarrow$(ii)]{cigdubmun},
in  view of
\eqref{equation:due}
 and 
 Corollary \ref{corollary:spectral}(i).

\medskip 
(ii)  Arguing as in  the proof of  Theorem
\ref{theorem:equivalence-unified}(III), for every
$x\in W $ some  dimension map  $r  \in E(A)$ vanishes
precisely  over
 a clopen neighbourhood
of  $x$ contained in $W$. 
Since   0 is isolated in the  range
of $r$,  replacing if necessary  $r$ by 
$$
r_W\,\,\,\,
=\underbrace{r \oplus\dots\oplus r}_{\mbox {\tiny suitably many summands}}
$$
we
may assume  $r_W$ to be $\{0,1\}$-valued. 
By  compactness, 
$W$ is covered by finitely many pairwise disjoint
clopens $W_1,\dots,W_m$ and corresponding 
$\{0,1\}$-valued  dimension maps
$r_{W_1},\dots,r_{W_m}$,  where for each
$i=1,\dots,m$,  the function  $r_{W_i}$ vanishes 
precisely over  $W_i.$
The zeroset of the  dimension map 
$r_{W_1}\wedge \dots\wedge  r_{W_m}$ 
coincides with $W$.   
 
 \medskip 
 (iii)   By Theorem
 \ref{theorem:equivalence-unified},
  the range of $d_p$ is finite and $d_p$
 is continuous. It follows that 
 $d_p^{-1}(\rho)$ is a clopen subset of $\prim(A).$
 Now apply (ii).

\bigskip
(iv) 
By Theorems  \ref{theorem:equivalent-conditions}
and \ref{theorem:multi}(iv)-(v), 
$K_0(A)$ is a lattice-ordered abelian group and
$E(A)=\Gamma(K_0(A))$.
%
%
%
By \cite[Theorem 12.18]{goo-ams}, the 
extremal states of  $K_0(A)$
coincide with the unit preserving $\ell$-ho\-m\-o\-mor\-ph\-isms of
$K_0(A)$ into the additive group  $\mathbb R$ of
 real numbers endowed with the usual order.
So let $s\colon K_0(A)\to \mathbb R$ be an extremal state.
The kernel of $s$  is a maximal ideal  
 of  $K_0(A)$. Corollary \ref{corollary:spectral}(v)
 yields a unique
 maximal ideal $\mathfrak s$ of $A$ such that
 $
\ker s = K_0(\mathfrak s).
 $
Since, as we have seen,
  (\cite[IV.15]{dav}, \cite[Corollary 9.2]{eff}),\,
 $K_0$ preserves exact sequences, then 
 $
 K_0(A/\mathfrak s)\cong {K_0(A)}/{\ker s}.
 $
 Again, Corollary \ref{corollary:spectral}(v)   yields
   a unique maximal ideal $M$
 of $E(A)$ such that
 $
 M=\eta(\mathfrak s)=K_0(A)\cap \Gamma(K_0(A)).
 $
We then have isomorphisms
\begin{equation}
\label{equation:quatuor}
E\left(\frac{A}{\mathfrak s}\right)
\cong
\frac{E(A)}{\eta(\mathfrak s)}
\cong 
\frac{\Gamma(K_0(A))}{K_0(A)\cap E(A)}
\cong
\frac{\Gamma(K_0(A))}{K_0(\mathfrak s)}
\cong
 \Gamma\left(\frac{K_0(A)}{\ker s}\right).
\end{equation}
By 
   (\cite[Definition 2.4]{mun-jfa}),
 $E(A)=\Gamma(K_0(A))$
coincides with the unit interval of $K_0(A)$
equipped with the order-unit 1 $=[1_A]$,
and with the operations  
$$x\oplus y= u\wedge (x+y)
\mbox{\,\,\,and\,\,\,}  \neg y=1-y.
$$
Since $M$ is a maximal ideal of $E(A)$,
by    \cite[Theorem 4.16]{mun11}, \,\,
 $E(A)/M\cong E(A/\mathfrak s)$  are
 uniquely isomorphic to
 the same  finite MV-subalgebra $L$ of $\interval$.
 By \cite[Corollary 3.5.4]{cigdotmun}, 
 $L = \{0,1/m,\dots(m-1)/m,1\}$ for a uniquely determined
 integer $m\geq 1.$
 Since $\Gamma$ is a categorical
 equivalence, from $\Gamma(\mathbb Z, m)= L$
 it follows that  
  $K_0(A/\mathfrak s)\cong (\mathbb Z, m)$,
showing  that  the state  
$s$ is discrete.

\bigskip
(v)  We prepare:

\bigskip
 \noindent
 {\it Claim 1:}
For any $p,q\in \proj(A)$ there are 
 clopens  $X, Y\subseteq \prim(A)$ = maximal ideal
 space of $A$,
such that for every $\mathfrak m\in \prim(A)$ 
$$
\mathfrak m\in X\,\,\,\, \Leftrightarrow 
\,\,\,\, d_p(\mathfrak m)\leq d_q(\mathfrak m) \mbox{ and }
\mathfrak m\in Y
\,\,\,\,    \Leftrightarrow 
\,\,\,\, d_p(\mathfrak m)\geq d_q(\mathfrak m).
$$

As a matter of fact, recalling the notational
stipulation \eqref{equation:odot},
by  \cite[Lemma 1.1.2]{cigdotmun},  
$d_p(\mathfrak m)\leq d_q(\mathfrak m)$
iff $(d_p\odot\neg d_q)(\mathfrak m)=0.$ Similarly,
and 
$d_p(\mathfrak m)\geq d_q(\mathfrak m)$
iff $ (d_q\odot\neg d_p)(\mathfrak m)=0.$
Now  the zeroset $f^{-1}(0)\subseteq 
\prim(A)$ of any  dimension map $f$  is clopen,
because the range of  $f$ is finite and $f$ is continuous.
Conversely, by (iii), every clopen subset of 
$\prim(A)$
 is the zeroset of some  $\{0,1\}$-valued  dimension map
 $f$, i.e.,
 (Theorem 
\ref{theorem:threeconditions}(ii)$\Leftrightarrow$(iv)),
 the zeroset of some characteristic
element of $K_0(A)$.
  Our first claim is settled.
  
  \bigskip
  
A routine variant of the proof of Claim 1 yields:
  
  \bigskip
 \noindent
 {\it Claim 2:}
 For any $p,q\in \proj(A)$ there are 
 clopens  $X, Y\subseteq \prim(A)$
such that 
for every $\mathfrak m\in \prim(A)$,
$$
\mathfrak m\in X \Leftrightarrow d_p(\mathfrak m) < d_q(\mathfrak m) \mbox{ and }
\mathfrak m\in Y   \Leftrightarrow d_p(\mathfrak m)\geq d_q(\mathfrak m).
$$

 \medskip
 \noindent
 Next let
 $$
  \maxspec(K_0(A))
 $$
 denote the maximal spectral space of $K_0(A),$
 By
 \cite[Theorems 7.2.2, 
 7.2.4]{cigdotmun} and Corollary \ref{corollary:spectral}(ii),
 \begin{equation}
 \label{equation:trespazi}
 \maxspec(K_0(A))\cong \boldsymbol{\mu}(E(A))
 \cong \prim(A)
 \end{equation}
and   $ \maxspec(K_0(A))$  can be safely identified with
 $\prim(A)$ and with $\boldsymbol{\mu}(E(A))$.
By \cite[Theorems 3.8-3.9]{mun-jfa},
 $K_0(A)$ is (isomorphic to)
 the
unital $\ell$-group of functions on  $\prim(A)$
generated by the  dimension maps,
 with the constant $u=1=[1_A]$ as the
unit,  and with the pointwise $\ell$-group operations
of $\mathbb R.$ By Theorem
\ref{theorem:equivalence-unified},  each function in  $K_0(A)$ is
continuous, rational-valued, and has a  finite range.

 As explained
  in  \cite[p.126]{goo-ams},
  to prove that $K_0(A)$ has general comparability,
  for all $h,k\in K_0(A)$ we must find a direct product decomposition
  $$
  K_0(A)=G_1\times G_2
  $$
  such that the $G_1$-components of $h$ and $k$ satisfy
  $h_1\leq k_1$ while the $G_2$-components of 
  $h$ and $k$ satisfy  $h_2\geq k_2.$  
  By the translation invariance of the lattice 
  order of  $ K_0(A)$
   and the defining property of
  the  unit $u$ of $K_0(A)$,
replacing, if necessary,   $h,k$ by $h+mu, k+mu$
(for a suitably large integer $m$), we may   assume
   $h,k \geq 0.$
   
     \bigskip
 \noindent
 {\it Claim 3:}
There is  a clopen
  $X_1\subseteq  \maxspec(K_0(A))$ coinciding with 
  the set  $X_1$ of maximal
  ideals of   $N$ of $K_0(A)$ such that 
  $h/N\leq k/N$.    
 
As a matter of fact, let
  $$x_1,x_2,\dots, x_{n_1}\,\,\, \mbox{ and }\,\,\,\,
  y_1,y_2,\dots, y_{n_2}$$
be elements  of $\Gamma(K_0(A))=E(A)$
having the following properties:
$$x_i\oplus x_{i+1}=x_i,\,\,\,\, \,\,\,\, 
 \sum_{i=1}^{n_1}x_i=h,  \,\,\,\,\,\,\,\,
 y_i\oplus y_{i+1}=y_i,\,\,\,\,\,\,\,
 \sum_{i=1}^{n_2}y_i=k.
 $$
 Their existence is ensured by
 \cite[Proposition 3.1(i)]{mun-jfa}.  (Actually, 
 these sequences are uniquely determined by $h$ and $k$, up to
 a tail of zeros.)
Adding a finite tail of zeros
 to the shortest
sequence,
we may assume   $n_1=n_2=n$ without loss of
generality.
  Recalling the notational stipulation
  \eqref{equation:odot}, for 
  each $i=1,\dots,n$ let
   $X_{1,i}$ be the zeroset of  the dimension map
   $x_i\odot \neg y_i$. The identification
   \eqref{equation:trespazi} yields
   $$
   X_{1,i}=\{N\in \maxspec(K_0(A))\mid
   x_i/N\leq y_i/N\,\,\,\mbox{for all }\,\,\, i=1,\dots,n)\},
   $$
   because  (\cite[Lemma 1.1.2]{cigdotmun}),
 $$x_i/N\odot\neg y_i/N =0\,\,\,\,\, \Leftrightarrow\,\,\,\,\,
 x_i/N\leq y_i/N.
 $$
Now by
  \cite[Proposition 3.1(ii)]{mun-jfa},  
   for any $N \in \maxspec(K_0(A))$
the inequality
$h/N\leq k/N$ is equivalent to the simultaneous occurrence
of the inequalities 
$$
x_1/N\leq y_1/N,\,\dots,\, x_n/N\leq y_n/N.
$$
As a consequence,   the set 
 $X_1=\bigcap_{i=1}^n X_{1,i}$ satisfies
\begin{equation}
\label{equation:exone}
  N \in X_1\Leftrightarrow h/N\leq k/N.
\end{equation}
Since the range of every  dimension map $f$
is finite  and $f$ is continuous,   
each $X_{1,i}$
 is a clopen subset of $ \maxspec(K_0(A))$, and so is
 $X_1$. 
 Thus  $X_1$
 has the desired properties, and our third claim is settled.

\medskip 
The complementary  clopen  
  $X_2=  \maxspec(K_0(A))\setminus X_1$  has the
  property that   for
    every maximal ideal $N$ of $K_0(A)$,\,\,\,
$
 N \in X_2\Leftrightarrow h/N >  k/N.
$

\medskip
In view of (iii),  for each  $j=1,2$ 
 let $e_j $ be the uniquely determined
  $\{0,1\}$-valued  dimension map
satisfying
  $e_j^{-1}(0)=X_j.$  Each $e_j$ is a 
 characteristic element of  $K_0(A)$,
(Theorem \ref{theorem:threeconditions}).
  Let $I_j$ be the ideal of $K_0(A)$ generated 
  by   $e_j$.   
  The $\ell$-homomorphisms of $K_0(A)$ into itself induced by the
  two ideals $I_1, I_2$ 
    provide the desired direct product decomposition
    $
K_0(A)\cong  K_0(A)/I_1 \times K_0(A)/I_2.
$
Up to isomorphism, every  
  $g \in K_0(A)$ splits  into its restrictions
   $g_1= g  \restrict X_1$
and 
  $g_2= g \restrict X_2$.  The $ K_0(A)/I_1$
  components of $h$ and $k$ satisfy  $h_1\leq k_1$.
  The $ K_0(A)/I_2$ components satisfy
  $h_2 >  k_2$.
A fortiori,  $K_0(A)$ has general comparability. 
 \end{proof}

\medskip

\subsection*{Central projections as fixpoints}

\bigskip 
\begin{corollary} 
\label{corollary:sharpener}
Suppose the liminary C*-algebra
$A$ satisfies the two equivalent conditions     of Theorem
\ref{theorem:equivalent-conditions}.  
For all $p,q\in \proj(A)$ we have:

\smallskip
\begin{enumerate}
\item  (Fixpoint) The sequence
$
[p]\sqsupseteq [p]_\sigma \sqsupseteq [p]_{\sigma\circ \sigma}
\sqsupseteq [p]_{\sigma\circ \sigma\circ \sigma}
\sqsupseteq \dots
$
is eventually constant.  

\medskip
\item Let \,\,$n(p)$\, be the least integer $m$ such that
$$
 [p]_{\underbrace{\sigma\circ \dots \circ \sigma}_{m\, \rm times}}
 =
  [p]_{\underbrace{\sigma\circ \dots \circ \sigma}_{m+1\, \rm times}}.
$$  
Then
\begin{itemize}

\smallskip
\item[(i)]   If $p$ is central or 
$p\sim 1_A-p$,\,\, 
$n(p)=0$.

\medskip
\item[(ii)]    If $n(p)=0$ then  for every $\mathfrak p\in \prim(A)$,\,
either 
 $p/\mathfrak p\in \{0,1\}$ or   $p/\mathfrak p\sim (1_A-p)/\mathfrak p$.
 
\medskip
\item[(iii)]  
Suppose for each $\mathfrak p\in \prim(A)$,
  $$[p]/\mathfrak p\sim[1_A-p]/\mathfrak p
  \Leftrightarrow [q]/\mathfrak p\sim[1-q]/\mathfrak p.$$ 
 %
 %
 %
  Then $[p] \sqsubseteq [q]  \Rightarrow n(p) \leq n(q).$ 
\end{itemize}

\bigskip
\item
Let the set $\mathcal C_p\subseteq E(A)$ be defined by
 $$
 \mathcal C_p=
 \{[r]\in E(A)\mid r  \mbox{ is a central projection in $A$ such that }
 [r] \sqsubseteq [p] \}.
 $$
 
\smallskip
 \begin{itemize}
 \item[(I)]
 $\mathcal C_p$ nonempty. 

\medskip
 \item[(II)]
 $\mathcal C_p$ is a singleton
iff for no $\mathfrak p\in \prim(A)$ we have $p/\mathfrak p
\sim (1_A-p)/\mathfrak p$.  

\medskip
 \item[(III)]
 When
 $\mathcal C_p$ is a singleton,  the 
unique element $[r]\in \mathcal C_p$ equals  the fixpoint
$ [p]_{\underbrace{\sigma\circ \dots \circ \sigma}_{n(p)\, \rm times}}.$

\medskip
 \item[(IV)]
If 
 $p/\mathfrak p
\sim (1_A-p)/\mathfrak p$
for  some $\mathfrak p\in \prim(A)$ then 
$$ [p]_{\underbrace{\sigma\circ \dots \circ \sigma}_{n(p)\, \rm times}}
\,\,\,\oplus\,\,\,
[p]_{\underbrace{\sigma\circ \dots \circ \sigma}_{n(p)\, \rm times}}
\,\,\,\,\,\mbox{\,\,\,\,and\,\,\,}\,\,\,\,\,
  [p]_{\underbrace{\sigma\circ \dots \circ \sigma}_{n(p)\, \rm times}}
\,\,\,\odot\,\,\,
[p]_{\underbrace{\sigma\circ \dots \circ \sigma}_{n(p)\, \rm times}}
$$
are two distinct elements of  $ \mathcal C_p$. 
\end{itemize}
\end{enumerate}
\end{corollary}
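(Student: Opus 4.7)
The plan is to work throughout in the dimension-map realization \eqref{equation:ea} of $E(A)$, where $[p]$ is a continuous map $d_p\colon\prim(A)\to\interval\cap\mathbb Q$ of finite range, and, by Proposition \ref{proposition:transformation}(ii), $[p]_{\sigma^k}$ is the pointwise composition $(\sigma^*)^k\circ d_p$. Two basic facts about $\sigma^*$ will be used repeatedly: its fixpoints in $\interval$ are exactly $\{0,1/2,1\}$, and by Proposition \ref{proposition:sigma-bis} it strictly contracts $(0,1/2)$ downwards and $(1/2,1)$ upwards while keeping each of the closed intervals $[0,1/2]$ and $[1/2,1]$ invariant.

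For (1), Proposition \ref{proposition:sigma} together with transitivity of $\sqsubseteq$ (Proposition \ref{proposition:order}) yields a $\sqsubseteq$-decreasing sequence $[p]\sqsupseteq[p]_\sigma\sqsupseteq[p]_{\sigma\circ\sigma}\sqsupseteq\cdots$ inside the MV-subalgebra of $E(A)$ generated by $[p]$, which is finite by Theorem \ref{theorem:liminary}(i); antisymmetry of $\sqsubseteq$ forces the sequence to stabilize. For (2)(i), a central $p$ has $\{0,1\}$-valued $d_p$ by Theorem \ref{theorem:threeconditions}, so $\sigma^*\circ d_p=d_p$; and if $p\sim 1_A-p$, then $[p]=\neg[p]$ pulls back pointwise to $d_p\equiv 1/2$, which is $\sigma^*$-fixed. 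For (2)(ii), $n(p)=0$ forces every value $d_p(\mathfrak p)$ to be a fixpoint of $\sigma^*$, hence in $\{0,1/2,1\}$; in the matrix quotient $A/\mathfrak p\cong M_d$ of Theorem \ref{theorem:equivalence-unified}, the cases $0$ and $1$ correspond to $p/\mathfrak p\in\{0,1\}$, while $1/2$ corresponds to $p/\mathfrak p$ and $(1_A-p)/\mathfrak p$ having equal rank, i.e.\ being Murray--von Neumann equivalent. For (2)(iii) I introduce the pointwise stabilization time
\[
T(x)=\min\{m\geq 0\mid (\sigma^*)^m(x)=(\sigma^*)^{m+1}(x)\},
\]
and note that $n(p)=\max_{\mathfrak p\in\prim(A)}T(d_p(\mathfrak p))$, the maximum being attained since $d_p$ has finite range. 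An induction on $T(y)$ based on the piecewise-linear shape of $\sigma^*$ shows that $T(x)\leq T(y)$ whenever $0\leq x\leq y\leq 1/2$ or $1/2\leq y\leq x\leq 1$; combined with the extra hypothesis in (iii) this gives the pointwise inequality $T(d_p(\mathfrak p))\leq T(d_q(\mathfrak p))$ at each $\mathfrak p$, hence $n(p)\leq n(q)$.

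For (3), partition $\prim(A)$ into the three clopens
\[
U_0=d_p^{-1}([0,1/2)),\qquad U_{1/2}=d_p^{-1}(1/2),\qquad U_1=d_p^{-1}((1/2,1]),
\]
which are clopen because $d_p$ is continuous with finite range. By Theorem \ref{theorem:threeconditions}, central projections of $A$ correspond to $\{0,1\}$-valued dimension maps, and by Theorem \ref{theorem:liminary}(ii)--(iii) every clopen subset of $\prim(A)$ is the zeroset of some such map. Reading off the definition of $\sqsubseteq$, an element $[r]\in\mathcal C_p$ is exactly a $\{0,1\}$-valued dimension map that is $0$ on $U_0$ and $1$ on $U_1$, with free choice on $U_{1/2}$. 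Thus \emph{(I)} the indicator $\mathbf 1_{U_1}$ lies in $\mathcal C_p$; \emph{(II)} $\mathcal C_p$ is a singleton iff $U_{1/2}=\emptyset$, which via the rank interpretation means no $\mathfrak p\in\prim(A)$ satisfies $p/\mathfrak p\sim(1_A-p)/\mathfrak p$; \emph{(III)} when $U_{1/2}=\emptyset$, the fixpoint $f=[p]_{\sigma^{n(p)}}$ satisfies $f\sqsubseteq[p]$ by iterated Proposition \ref{proposition:sigma}, takes values in $\{0,1/2,1\}$ by (2)(ii), and hence in $\{0,1\}$ because $\sigma^*$ maps $[0,1/2)$ into itself and $(1/2,1]$ into itself, so that $f$ is the unique central element of $\mathcal C_p$; and \emph{(IV)} when $U_{1/2}\neq\emptyset$, the same $f$ equals $1/2$ on $U_{1/2}$, so $f\oplus f$ and $f\odot f$ are $\{0,1\}$-valued, agree with $f$ off $U_{1/2}$, and take values $1$ respectively $0$ on $U_{1/2}$, yielding two distinct central members of $\mathcal C_p$.

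The only step that asks for genuine calculation rather than bookkeeping is the monotonicity of $T$ used in (2)(iii); everything else is a careful translation of the algebraic condition $\sqsubseteq$ into the geometric partition $U_0\cup U_{1/2}\cup U_1$, which is made possible by the liminary hypothesis via the dimension-map picture \eqref{equation:ea} and the local finiteness of $E(A)$ (Theorem \ref{theorem:liminary}(i)).
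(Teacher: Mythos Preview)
Your proof is correct and follows the paper's approach closely: both work in the dimension-map realization \eqref{equation:ea} and ultimately partition $\prim(A)$ according to whether $d_p$ lies below, at, or above $1/2$. Two minor remarks: for (1) you invoke local finiteness (Theorem~\ref{theorem:liminary}(i)) where the paper simply uses that $1/2$ is isolated in the finite range of $d_p$---either works; and in (2)(iii) your monotonicity claim for $T$ is literally false at the endpoint $y=1/2$ (since $T(1/2)=0$ while $T(x)>0$ for $0<x<1/2$), but this boundary case is exactly what the hypothesis $d_p^{-1}(1/2)=d_q^{-1}(1/2)$ handles, so the pointwise inequality and hence $n(p)\le n(q)$ still follow.
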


\begin{proof} In view of
Theorem
\ref{theorem:equivalence-unified}, throughout we will
identify
 $E(A)$ with the separating MV-algebra   
of all  dimension maps.
Thus each  $[p]\in E(A)$  is a continuous 
rational-valued function $d_p$ with a finite range
over the boolean space  
${\boldsymbol{\mu}}(E(A))
=\Spec(E(A))
\cong \prim(A).$ 

\medskip

 (1)  The value 
$1/2$ is 
isolated in $\range(d_p).$ 
The definition of the map 
$\sigma^*\colon \interval\to \interval$
immediately yields the desired conclusion.

\medskip
To prove  (2) we argue as follows:

\smallskip

 (i)
 If $p$ is central, $\range(d_p) \subseteq \{0,1\}$, 
whence  $n(p)=0$, because $\sigma^*(0)=0$ and
 $\sigma^*(1)=1$. 
  If $p\sim 1_A-p$  
 then  $d_p(\mathfrak p)=1/2$ 
for all  $\mathfrak p\in \prim(A)$.  
From  $\sigma^*(1/2)=1/2$ it follows that
$n(p)=0$.

\medskip
(ii)   
   If $n(p)=0$ then  for every $\mathfrak p\in \prim(A)$,\,\,\,
 $p/\mathfrak p\in \{0,1\}$ or   $p/\mathfrak p\sim (1_A-p)/\mathfrak p$,
 because  $\sigma^*(t)=t$ iff $t\in\{0,1/2,1\}.$

\medskip
(iii)  The hypothesis means
  $d_p^{-1}(1/2)=d_q^{-1}(1/2)$.  
  The conclusion then follows
by  definition of $\sqsubseteq$ and $n(p).$

 \medskip
 (3)  
 Let $f_p =  [p]_{{\sigma\circ \dots \circ \sigma}} =
 (d_p)_{{\sigma\circ \dots \circ \sigma}}$\,\,\,\,\,
 ($n(p)$ times).
  Let  $C\subseteq\prim(A)$
   be defined by  $C= f_p^{-1}(1/2)$.
   By Theorem
\ref{theorem:equivalence-unified}, $C$
is clopen.
  
    \smallskip
  If 
$C=\emptyset$, then  $f_p=(f_p)_\sigma$ by definition
of $n(p)$.
For every  $\mathfrak p\in \prim(A)$,  either
$f_p(\mathfrak p)<1/2$, in which case 
$f_p(\mathfrak p)=0$,
or else $f_p(\mathfrak p)>1/2$,  in which case
$f_p(\mathfrak p)=1$.
Thus $f_p$ is the only element of  
$\mathcal C_p$.  This proves (I)-(IV) for the present case.

\medskip
If   $C\not=\emptyset$,
then $f_p\odot f_p$ pushes  the graph of $f_p\restrict C$ down to  0, leaving unaltered the rest of $f_p$\,; evidently
 $f_p\odot f_p$ is a $\{0,1\}$-valued  
 dimension map and 
$f_p\odot f_p=(f_p\odot f_p)_\sigma$.
 Similarly,  $f_p\oplus  f_p$
 pushes  the graph of  $f_p\restrict C$ up  to  1,
  leaving  the rest  unaltered.
So $f_p\oplus f_p=(f_p\oplus f_p)_\sigma$.
This completes the proof of (I)-(IV). 
\end{proof}

Intuitively,  
the map $[p]\mapsto [p]^\Game= [p]_\sigma$ is ``centripetal''
in the sense that   $ [p]^\Game\sqsubseteq [p] $, and
  a finite number of iterations of the map leads
to a unique $\Game$-fixpoint, in such a way that if 
 $p$ is central then $[p]=[p]^\Game$. 
If $p$ is not central and 
$p/\mathfrak p\not=(1_A-p)/\mathfrak p$
for every  primitive
ideal  $\mathfrak p$ of $A$, then the
$\Game$-fixpoint   $[q]$ of
$[p]$  arises from some central
projection $q$ of $A$. 
If  
$p/\mathfrak p =(1_A-p)/\mathfrak p$
for some primitive
ideal  $\mathfrak p$ of $A$, then the same 
holds (not for $[q]$, but)  for  $[q]\oplus [q]$,
or  for $[q]\odot [q]$.

\bigskip
\section{Concluding remarks}
 AF$\ell$-algebras include
many interesting 
   classes of AF algebras, well beyond the trivial  examples
of  commutative AF algebras and finite-dimensional C$^*$-algebras.
Nontrivial examples are given by
the CAR algebra and, more generally,
 Glimm's UHF algebras, \cite{eff, brarob},
 the
Effros-Shen C*-algebras   
$\mathfrak F_{\theta}$ for irrational $\theta\in\interval$, 
\cite[p.65]{eff}, which play  an
interesting role in topological dynamics,
\cite{bratteli1, bratteli2, pimvoi}.
Further examples are provided by
AF$\ell$-algebras whose $K_0$-group has
general comparability, \cite[Proposition 8.9, p.131]{goo-ams}.
Non-simple examples include the  Behncke-Leptin
C*-algebras $\mathcal A_{m,n}$ with a two-point dual
\cite{behlep},   
and  AF algebras with a directed set of finite
dimensional *-subalgebras, \cite{laz2}.

The  ``universal''
  AF algebra $\mathfrak M$
of  \cite[\S 8]{mun-jfa} is an AF$\ell$-algebra.
It is defined by
$$
E(\mathfrak M)=Free_\omega = \mbox{ the free
countably generated  MV-algebra.}
$$
Every  AF algebra  with comparability
of projections is a quotient of $\mathfrak M$
by a   primitive essential ideal,
 \cite[Corollary 8.7]{mun-jfa}. Every 
 (possibly non-unital) AF algebra may be embedded into
a quotient of $\mathfrak M$, \cite[Remark 8.9]{mun-jfa}.
One more example is given by the 
Farey  AF$\ell$-algebra $\mathfrak M_{1}$
introduced in 
 \cite{mun-adv}. 
 It is defined by
$
E(\mathfrak M_1)=Free_1.
$
By \cite{pimvoi}, 
 every irrational rotation C$^*$-algebra 
 is embeddable into some (Effros-Shen) simple quotient of  
 $\mathfrak M_1, $ \cite[Theorem 3.1 (ii)]{mun-adv}.
$\mathfrak M_1$, in turn,
  is embeddable into
  Glimm's universal UHF algebra,
   \cite[Theorem 1.5]{mun-milan}.
 As shown in \cite{mun-lincei, mun-milan}, 
 the AF algebra $\mathfrak A$ more recently
 considered by   Boca \cite{boc}  coincides with 
$\mathfrak M_1$. For an account of the interesting properties and  
 applications of $\mathfrak M_1$  see 
 \cite{boc, eck,
 mun-adv,  mun-lincei, 
	mun-milan}. 
 
 Liminary   $C^*$-algebras with
boolean spectrum, and more generally with Hausdorff spectrum,
 are considered
by Dixmier \cite[passim]{dix}.
 Liminary   $C^*$-algebras with
boolean spectrum
{\it per se} are the main topic of \cite{cigellmun}. Here
 the authors consider the analogue of 
Kaplansky's problem for these algebras,
and prove that the Murray von Neumann
order of projections alone is 
sufficient to uniquely recover
the $C^*$-algebraic structure.
Thus
{\it  If two liminary C*-algebras with Boolean spectrum have order-isomorphic Murray von Neumann posets,   then they are isomorphic.
}

  \begin{problem} 
Extend  the characterization
 $$
\boxed{\mbox{$p$ central in $A$  $\Leftrightarrow$ $[p] \sqsubseteq$-minimal 
 $\Leftrightarrow$ $[p]$ characteristic
in $K_0(A)$
$\Leftrightarrow$ $[p]$ a fixpoint}}
$$
outside the domain of  AF$\ell$-algebras. 
  \end{problem}

 \bibliographystyle{plain}

\end{document}